\pgfplotsset{compat=1.14}
\tikzset{cross/.style={cross out, draw=black, fill=none, minimum size=2*(#1-\pgflinewidth), inner sep=0pt, outer sep=0pt}, cross/.default={2pt}}
\newtheorem{defi}{Definition}[section]
\newtheorem{theorem}[defi]{Theorem}
\newtheorem{lem}[defi]{Lemma}
\newtheorem{cor}[defi]{Corollary}
\def\@endtheorem{\qed\endtrivlist\@endpefalse } 
\newtheoremstyle{remarks}
{10pt} 
{10pt} 
{} 
{} 
{\bfseries} 
{.}
{ } 
{}
\theoremstyle{remarks}
\newtheorem{rem}[defi]{Remark}
\newcommand{\N}{\ensuremath{\mathbb{N}}}
\newcommand{\Z}{\ensuremath{\mathbb{Z}}}
\newcommand{\R}{\ensuremath{\mathbb{R}}}
\newcommand{\C}{\ensuremath{\mathbb{C}}}
\newcommand{\norm}[2][]{\ensuremath{\left\|#2\right\|_{#1}}} 
\newcommand{\snorm}[2][]{\ensuremath{|#2|_{#1}}}
\newcommand{\snormlr}[2][]{\ensuremath{\left|#2\right|_{#1}}}
\newcommand{\scalarprod}[2]{\ensuremath{\left\langle{#1,#2}\right\rangle}} 
\renewcommand{\d}{\, d}
\newcommand{\uU}{\underline{U}}
\newcommand{\uW}{\underline{W}}
\newcommand{\ucurlN}{\underline{\curlN}}
\newcommand{\uX}{\underline{X}}
\newcommand{\curlX}{\ensuremath{\mathcal{X}}}
\newcommand{\curlM}{\ensuremath{\mathcal{M}}}
\newcommand{\curlP}{\ensuremath{\mathcal{P}}}
\newcommand{\curlL}{\ensuremath{\mathcal{L}}}
\newcommand{\curlY}{\ensuremath{\mathcal{Y}}}
\newcommand{\curlZ}{\ensuremath{\mathcal{Z}}}
\newcommand{\curlU}{\ensuremath{\mathcal{U}}}
\newcommand{\curlE}{\ensuremath{\mathcal{E}}}
\newcommand{\curlN}{\ensuremath{\mathcal{N}}}
\newcommand{\curlV}{\ensuremath{\mathcal{V}}}
\newcommand{\curlW}{\ensuremath{\mathcal{W}}}
\newcommand{\curlO}{\ensuremath{\mathcal{O}}}
\newcommand{\real}{\operatorname{Re}}
\newcommand{\adj}{\operatorname{adj}}
\newcommand{\pushright}[1]{\ifmeasuring@#1\else\omit\hfill$\displaystyle#1$\fi\ignorespaces}
\newcommand{\pushleft}[1]{\ifmeasuring@#1\else\omit$\displaystyle#1$\hfill\fi\ignorespaces}
\title{Modulating traveling fronts for the Swift-Hohenberg equation in the case of an additional conservation law}
\author{Bastian Hilder\footnote{Institut f{\"u}r Analysis, Dynamik und Modellierung, Universit{\"a}t Stuttgart, Pfaffenwaldring 57, 70569 Stuttgart, Germany. E-mail address: \href{mailto:bastian.hilder@mathematik.uni-stuttgart.de}{bastian.hilder@mathematik.uni-stuttgart.de} \newline \copyright 2020. This manuscript version is made available under the CC-BY-NC-ND 4.0 license \href{http://creativecommons.org/licenses/by-nc-nd/4.0/}{\nolinkurl{http://creativecommons.org/licenses/by-nc-nd/4.0/}}}}
\date{\today}
\begin{document}
\maketitle

\begin{abstract}
We consider the one-dimensional Swift-Hohenberg equation coupled to a conservation law. 
As a parameter increases the system undergoes a Turing bifurcation. 
We study the dynamics near this bifurcation. 
First, we show that stationary, periodic solutions bifurcate from a homogeneous ground state. 
Second, we construct modulating traveling fronts which model an invasion of the unstable ground state by the periodic solutions. 
This provides a mechanism of pattern formation for the studied system. 
The existence proof uses center manifold theory for a reduction to a finite-dimensional problem. 
This is possible despite the presence of infinitely many imaginary eigenvalues for vanishing bifurcation parameter since the eigenvalues leave the imaginary axis with different velocities if the parameter increases. 
Furthermore, compared to non-conservative systems, we address new difficulties arising from an additional neutral mode at Fourier wave number $k=0$ by exploiting that the amplitude of the conserved variable is small compared to the other variables.
\end{abstract}

\textbf{Keywords.} Modulating fronts; Center manifold reduction; pattern formation

\textbf{Mathematics Subject Classification (2010).} 34C37, 35B10, 35B32, 35B36, 35Q35

\section{Introduction}\label{sec:introduction}

The dynamics of pattern forming systems on unbounded domains has been an important part of research for the last decades since such phenomena can be observed in many real-world examples from biology, chemistry and fluid dynamics, see \cite{crossHohenberg93}.
Prototypical examples of such systems are the Taylor-Couette problem and the Rayleigh-B{\'e}nard problem.
Typically, pattern formation occurs as follows.
When an external control parameter, e.g.\  the temperature for the Reyleigh-B{\'e}nard problem, increases beyond a critical value the spatially homogeneous equilibrium gets unstable and spatially periodic solutions bifurcate.

We are interested in the mechanism which drives the pattern formation in parabolic evolution equations on the real line.
It turns out that in such systems patterns often arise in the wake of an invading heteroclinic front which connects the unstable ground state $u_0$ to the periodic pattern $u_\text{per}$.
To model this behavior, we construct \emph{modulating traveling fronts} in this paper.
These are solutions of the form 
\begin{align}
	u(t,x) = U(x-ct,x-\beta t),
	\label{eq:modfrontAnsatz}
\end{align}
where $U$ is periodic with respect to its second argument and satisfies
\begin{align}
	\lim_{\xi \rightarrow \infty} U(\xi,p) = u_0(p) \text{ and } \lim_{\xi \rightarrow -\infty} U(\xi,p) = u_\text{per}(p),
	\label{eq:modfrontBC}
\end{align}
see Figure \ref{fig:modfront}.
Here, $x \in \R$ denotes the unbounded spatial direction of the problem, $c$ the velocity of the front, $t \geq 0$ the time and $\beta$ the phase velocity.
Solutions of this type are already established for non-conservative systems which exhibit a Turing bifurcation.
These results include the case of cubic nonlinearities such as the Swift-Hohenberg equation \cite{colletEckmann86,eckmannWayne91} as well as quadratic nonlinearities such as the Taylor-Couette problem in an infinite cylinder \cite{haragusSchneider99} and a nonlocal Fisher-KPP equation \cite{fayeHolzer15}.

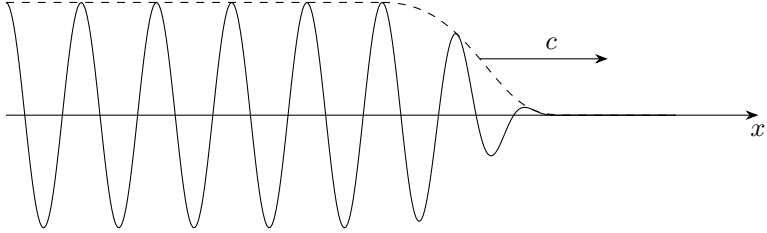
\begin{figure}
	\centering
	\begin{tikzpicture}
		\draw[-{Stealth[scale=1]}] (-5,0) -- (5,0) node[below] {$x$};
		\draw[domain=-5:0,samples=200] plot (\x, {1.5*cos(2*pi*\x r)});
		\draw[domain=-0:3.9,samples=200] plot (\x, {1.5*e^(-3*\x^2/(4-\x)^2)*cos(2*pi*\x r)});
		\draw[domain=-5:0,dashed] plot (\x, {1.5});
		\draw[domain=0:3.9,dashed,samples=100] plot (\x, {1.5*e^(-3*\x^2/(4-\x)^2)});
		\draw[-Stealth] (1.3,0.75) -- (2.25,0.75) node[above] {$c$} -- (3,0.75);
	\end{tikzpicture}
	\caption{Modulating travelling front}
	\label{fig:modfront}
\end{figure}

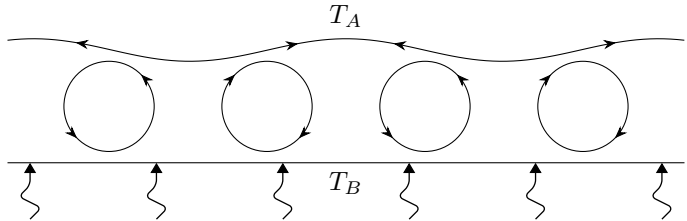
\begin{figure}
	\centering
	\begin{tikzpicture}[scale=1.5]
		\draw (-3,0.5) -- (3,0.5) node[below,pos=.5]{$T_B$};
		\draw[domain=-3:3,samples=100,
		decoration={markings,
			mark=at position 0.1 with {\arrowreversed[black]{Stealth}},
			mark=at position 0.43 with {\arrow[black]{Stealth}},
			mark=at position 0.57 with {\arrowreversed[black]{Stealth}},
			mark=at position 0.9 with {\arrow[black]{Stealth}}},
		postaction=decorate] plot(\x,{0.1*cos(130*\x)+1.5});
		\draw[decoration={markings, mark=at position 0.125 with {\arrow{Stealth}}, mark=at position 0.625 with {\arrow{Stealth}}},postaction={decorate}]	(-2.1,1) circle (0.4);
		\draw[decoration={markings, mark=at position 0.375 with {\arrowreversed{Stealth}}, mark=at position 0.875 with {\arrowreversed{Stealth}}},postaction={decorate}]	(-0.7,1) circle (0.4);
		\draw[decoration={markings, mark=at position 0.125 with {\arrow{Stealth}}, mark=at position 0.625 with {\arrow{Stealth}}},postaction={decorate}]	(0.7,1) circle (0.4);
		\draw[decoration={markings, mark=at position 0.375 with {\arrowreversed{Stealth}}, mark=at position 0.875 with {\arrowreversed{Stealth}}},postaction={decorate}]	(2.1,1) circle (0.4);
		\node[draw=none,fill=none] at (0,1.8) {$T_A$};
		\foreach \x in {-2.8000 ,  -1.6800 ,  -0.5600 ,   0.5600 ,   1.6800 ,   2.8000} {
			\draw[-Triangle,line width=0.5pt,rounded corners=2pt](\x,0)
			--++(0.1,0.1)
			--++(-0.2,0.1)
			--++(0.1,0.1)
			--++(0,0.2);
		}
	\end{tikzpicture}
	\caption{Schematic depiction of the B{\'e}nard-Marangoni convection close to the first instability, with $T_B > T_A$.}
	\label{fig:BenardMarangoni}
\end{figure}

In this paper, we show the existence of modulating traveling fronts for a pattern forming system with a conserved quantity which presents a nontrivial extension of the current results for non-conservative systems as we outline below.
Examples for pattern forming systems with a conserved quantity are the Bérnard-Marangoni convection (see Figure \ref{fig:BenardMarangoni}) and the flow down an inclined plane, see \cite{haeckerSchneiderZimmermann11}.
It turns out that the behavior of these systems differs from the one of classical pattern forming systems (see e.g.\ \cite{knobloch16}).
In particular, the Ginzburg-Landau equation cannot be justified as an amplitude equation as opposed to the classical situation where this is generically the case (see \cite{schneiderUecker17} for details).
Instead, a modified Ginzburg-Landau system, that is, a Ginzburg-Landau equation coupled to a conservation law, see \eqref{eq:amplitudeEquation1}--\eqref{eq:amplitudeEquation2}, appears as amplitude equation, see \cite{matthewsCox00}.
A closer analysis reveals that the additional conservation law in the amplitude equation comes from the presence of an additional critical spectral curve of the linearisation about the ground state touching the imaginary axis at Fourier wave number $k = 0$, see Figure \ref{fig:spectralSituation}.
This presents new difficulties in the rigorous justification of the amplitude equation which is done in \cite{haeckerSchneiderZimmermann11, schneiderZimmermann13, duellKashaniSchneiderZimmermann16} but also the dynamics of the modified Ginzburg-Landau system is less well understood as opposed to the classical Ginzburg-Landau equation (see \cite{schneiderZimmermann17} for a discussion).
Indeed, we will see that both issues lead to new difficulties for the problem at hand.

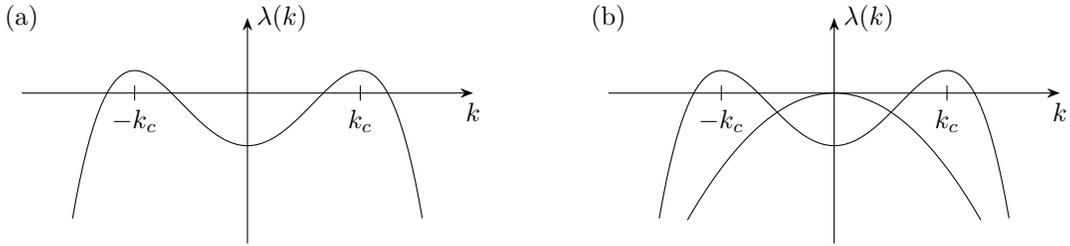
\begin{figure}
\centering
\begin{tikzpicture}[scale=1]
	\draw[-Stealth] (-3,0) -- (3,0) node[below] {$k$};
	\draw[-Stealth] (0,-2) -- (0,1) node[right] {$\lambda(k)$};
	\draw[-] (1.5,0.1)--(1.5,-0.1) node[below] {$k_c$};
	\draw[-] (-1.5,0.1)--(-1.5,-0.1) node[below] {$-k_c$};
	\draw[domain=-2.325:2.325,samples=100] plot(\x,{-(1-(\x*\x)/(1.5*1.5))*(1-(\x*\x)/(1.5*1.5))+0.3});
	\node at (-3,1) {(a)};
\end{tikzpicture}
\hspace{1cm}
\begin{tikzpicture}[scale=1]
	\draw[-Stealth] (-3,0) -- (3,0) node[below] {$k$};
	\draw[-Stealth] (0,-2) -- (0,1) node[right] {$\lambda(k)$};
	\draw[-] (1.5,0.1)--(1.5,-0.1) node[below] {$k_c$};
	\draw[-] (-1.5,0.1)--(-1.5,-0.1) node[below] {$-k_c$};
	\draw[domain=-2.325:2.325,samples=100] plot(\x,{-(1-(\x*\x)/(1.5*1.5))*(1-(\x*\x)/(1.5*1.5))+0.3});
	\draw[domain=-1.95:1.95,samples=100] plot(\x,{-(\x*\x)/(1.5*1.5)});
	\node at (-3,1) {(b)};
\end{tikzpicture}
\caption{Curves of eigenvalues over the Fourier wave number $k$ for a classical Turing instability (a) and the new type of instability with marginally stable modes at $k = 0$ (b).}
\label{fig:spectralSituation}
\end{figure}

\subsection{The problem and main result}

We consider the Swift-Hohenberg equation with an additional conservation law of the form
\begin{subequations}
	\begin{align}
		\partial_t u &= -(1+\partial_x^2)^2 u + \alpha u + uv - u^3,
		\label{eq:SHe}\\
		\partial_t v &= \partial_x^2 v + \gamma \partial_x^2(u^2),
		\label{eq:ConE}
	\end{align}
\end{subequations}
with $\alpha, \gamma, u(t,x), v(t,x) \in \R$ and $t \geq 0$ (see \cite{coxMatthews03,schneiderZimmermann17}).
Here $\alpha$ takes the role of an external control parameter.
The system \eqref{eq:SHe}--\eqref{eq:ConE} has a family of spatially homogeneous equilibria $(0, v_0)$ with $v_0 \in \R$ which destabilizes when $\alpha > v_0$.
However, we restrict the following analysis to $v_0 = 0$, since the general case can be brought back to this case by replacing $\alpha$ by $\alpha + v_0$.
Linearisation of \eqref{eq:SHe}--\eqref{eq:ConE} about $(0,0)$ and the Fourier transform yield that the spectral situation is indeed as depicted in Figure \ref{fig:spectralSituation}b.
Hence, at $\alpha = 0$ there is a Turing bifurcation and we will prove that a family of periodic solutions bifurcates.
Close to the first instability, namely for $\alpha = \varepsilon^2 \alpha_0$, these solutions are of the form
\begin{subequations}
	\begin{align}
		u_\text{per}(x) &= \varepsilon A_u \cos(x+x_0) + \curlO(\varepsilon^2) \label{eq:perSolUIntroduction}\\
		v_\text{per}(x) &= \varepsilon^2 A_v \cos(2(x+x_0)) + \curlO(\varepsilon^3), \label{eq:perSolVIntroduction}
	\end{align}
\end{subequations}
with $x_0 \in [0,2\pi)$ and $A_u,A_v \in \R$ (see Lemma \ref{lem:periodicSols}).

Note that although \eqref{eq:SHe}--\eqref{eq:ConE} is a purely phenomenological model, as discussed in \cite{schneiderZimmermann17}, it shares important properties with the Bérnard-Marangoni problem.
In particular, both models show the same kind of Turing instability and both are reflection symmetric.
Moreover, the (formal) amplitude equation of \eqref{eq:SHe}--\eqref{eq:ConE} is of the same form as the one for the Bénard-Marangoni problem \cite[Theorem 4.3.3]{zimmermann14} (see Section \ref{sec:periodicSolutions}).
Thus, we expect that both models exhibit similar behavior close to the first instability.

As mentioned before, we aim to rigorously construct a modulating traveling front for the above system.
In particular, we are interested in slow moving fronts with velocity $c = \varepsilon c_0$, which is the natural scaling corresponding to the scaling of $\alpha$.
Furthermore, since the system \eqref{eq:SHe}--\eqref{eq:ConE} possesses reflection symmetry and the periodic solutions \eqref{eq:perSolUIntroduction}--\eqref{eq:perSolVIntroduction} are stationary, we set the phase velocity $\beta = 0$.
The idea of the existence proof is to use center manifold theory to reduce the problem to finding a heteroclinic orbit in a finite-dimensional system. We now briefly outline the strategy, describe the challenges therein and formulate our main result.
Finally, we comment on the differences to previous results and novelties in our work.

Center manifold theory is a well-established tool to reduce the dimension of a studied system.
We refer to \cite{haragusIooss11} for a detailed overview.
In the case at hand, it can be applied in the following, non-standard way.
Inserting the modulating traveling front ansatz into \eqref{eq:SHe}--\eqref{eq:ConE} we find that the linearisation about the ground state has infinitely many imaginary eigenvalues for $\varepsilon = 0$.
Therefore, standard center manifold results are not applicable, since these require that only finitely many eigenvalues lie on the imaginary axis.
However, as $\varepsilon$ gets positive we find similar to \cite{eckmannWayne91} that the eigenvalues depart from the imaginary axis with different velocities and thus, a spectral gap of size $\curlO(\sqrt{\varepsilon})$ between finitely many eigenvalues close to the imaginary axis and the rest of the spectrum arises.
We use this to construct a 6-dimensional center manifold although, since the size of the spectral gap depends on $\varepsilon$, it is not a priori clear whether small (with respect to $\varepsilon$) solutions are contained in the center manifold.
Therefore, we prove that the constructed center manifold is large enough to contain the perodic solutions \eqref{eq:perSolUIntroduction}--\eqref{eq:perSolVIntroduction}.
Finally, we establish the existence of heteroclinic connections for the finite-dimensional system on the center manifold and arrive at the main result.

\begin{theorem}\label{thm:ModFrontsIntroduction}
	For $c_0^2 > 16\alpha_0 > 0$ there exist $\varepsilon_0 > 0$ and $\gamma_0 > 0$ such that for all $\varepsilon \in (0,\varepsilon_0)$ and $\snorm{\gamma} < \gamma_0$ the system \eqref{eq:SHe}--\eqref{eq:ConE} has modulating traveling front solutions of the form \eqref{eq:modfrontAnsatz} satisfying the boundary conditions \eqref{eq:modfrontBC}.
	Furthermore, the solution is of the form
	\begin{align*}
		u_f(t,x) &= \varepsilon U(\varepsilon(x-\varepsilon c_0 t)) \cos(x+x_0) + \curlO(\varepsilon^2) \\
		v_f(t,x) &= \varepsilon^2 \left[V_0(\varepsilon(x-\varepsilon c_0 t)) + V_1(\varepsilon(x-\varepsilon c_0 t)) \cos(2(x+x_0))\right] + \curlO(\varepsilon^3)
	\end{align*}
	for any $x_0 \in [0,2\pi)$, with $U,V_0,V_1 \in \R$.
\end{theorem}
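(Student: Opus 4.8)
The plan is to set up the modulating front ansatz as a spatial-dynamics problem, reduce it via a center manifold, and then find the desired heteroclinic orbit in the reduced finite-dimensional system. Concretely, I would first substitute $u(t,x) = U(x - \varepsilon c_0 t, x)$ and $v(t,x) = V(x - \varepsilon c_0 t, x)$ into \eqref{eq:SHe}--\eqref{eq:ConE}, writing $\xi = x - \varepsilon c_0 t$ for the slow traveling variable and keeping $p = x$ as the (periodic) fast variable. Exploiting $2\pi$-periodicity in $p$ and expanding $U$, $V$ in a Fourier series in $p$, the system becomes an infinite-dimensional ODE in $\xi$, i.e.\ a dynamical system in which $\xi$ plays the role of time, posed on a suitable function space (e.g.\ an appropriate Sobolev space of $2\pi$-periodic functions of $p$), and reformulated as a first-order system in the variables $(U,\partial_\xi U, \partial_\xi^2 U, \partial_\xi^3 U, V, \partial_\xi V)$. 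The ground state $(0,0)$ and the periodic solutions \eqref{eq:perSolUIntroduction}--\eqref{eq:perSolVIntroduction} correspond to equilibria, respectively a circle of equilibria, of this spatial system, and a modulating front is precisely a heteroclinic orbit joining them.

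Second, I would analyze the linearization of this spatial dynamical system about $(0,0)$ at $\varepsilon = 0$. As explained in the introduction, the Fourier modes $k = \pm 1$ (coming from the Turing instability of the Swift-Hohenberg part) together with the neutral mode $k = 0$ of the conservation law produce a finite set of eigenvalues on the imaginary axis, while for $\varepsilon > 0$ these split off at rates $\curlO(\sqrt{\varepsilon})$ (the Turing modes) and the conservation-law mode contributes the remaining central directions; all other modes stay uniformly bounded away from the imaginary axis. This yields a $\varepsilon$-dependent spectral gap of size $\curlO(\sqrt\varepsilon)$ separating a $6$-dimensional central part from a hyperbolic remainder. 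I would then invoke a (parameter-dependent) center manifold theorem — in the spirit of \cite{haragusIooss11,eckmannWayne91} — adapted to the fact that the gap shrinks with $\varepsilon$, obtaining a $6$-dimensional invariant manifold $\mathcal{M}_\varepsilon$ on which the reduced flow is a $6$-dimensional ODE, and carefully tracking that its size (radius of validity in phase space) is $\curlO(\varepsilon^a)$ for an exponent $a$ small enough that the periodic solutions and the expected front lie inside it. The key structural input here — and what makes the conservation law tractable — is that the $v$-component has amplitude $\curlO(\varepsilon^2)$ while $u$ has amplitude $\curlO(\varepsilon)$, so in the rescaled variables the $v$-equation decouples at leading order and the dangerous $k=0$ neutral direction is slaved appropriately.

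Third, I would compute the reduced vector field on $\mathcal{M}_\varepsilon$ to the relevant order in $\varepsilon$ using normal-form / near-identity transformations, introducing the rescaling $\xi \mapsto \varepsilon X$, $U = \varepsilon A(X) + \cdots$, $V = \varepsilon^2 (B_0(X) + B_1(X)\cos(2(x+x_0))) + \cdots$. At leading order the reduced system should be a modified Ginzburg-Landau traveling-wave system, i.e.\ a second-order ODE for the amplitude $A$ coupled to the conserved quantity $B_0$ (and with $B_1$ algebraically determined), with the front velocity $c_0$ appearing as a coefficient. On this leading-order system one then constructs a heteroclinic orbit connecting $A = 0$ (the unstable ground state) to the nonzero fixed point $A = A_u$ (the periodic solution): this is where the condition $c_0^2 > 16\alpha_0 > 0$ enters, ensuring $A = 0$ is a stable node (or at least has a one-dimensional unstable... more precisely that the relevant linearization at $A=0$ has real eigenvalues of the same sign so that a monotone front exists), exactly as for the classical Swift-Hohenberg / Kolmogorov--Petrovsky--Piskunov-type front existence criterion. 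Finally I would perturb: show that this leading-order heteroclinic persists for $\varepsilon > 0$ and $|\gamma| < \gamma_0$ by a transversality / implicit-function argument (or a Lyapunov--Schmidt reduction on the connecting-orbit problem), using hyperbolicity of the two endpoint equilibria within $\mathcal{M}_\varepsilon$, and translate back through all the changes of variables to recover the stated asymptotic form of $u_f, v_f$.

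The main obstacle, I expect, is the center manifold reduction with an $\varepsilon$-dependent spectral gap: standard center manifold theorems assume a fixed gap, and here one must redo the cut-off / fixed-point construction with $\varepsilon$-dependent weights and carefully quantify the size of the manifold, then verify that both the bifurcating periodic orbits \eqref{eq:perSolUIntroduction}--\eqref{eq:perSolVIntroduction} and the constructed front actually lie on it — a step that has no analogue in the classical non-conservative theory and that crucially relies on the amplitude separation between $u$ ($\curlO(\varepsilon)$) and $v$ ($\curlO(\varepsilon^2)$). The secondary difficulty is handling the extra neutral mode at $k = 0$: it enlarges the center manifold and the reduced ODE from the classical $4$-dimensional (real and imaginary parts of $A$ and $\partial_X A$) to $6$-dimensional, and one must check that the heteroclinic connection problem remains solvable — i.e.\ that the conserved-quantity direction does not obstruct the front — which is precisely where the smallness of $|\gamma|$ is used.
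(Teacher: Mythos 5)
Your overall architecture coincides with the paper's: a spatial-dynamics formulation in $\xi$ with periodic variable $p$, a $6$-dimensional center manifold whose size must be tracked in $\varepsilon$ because the spectral gap is only $\curlO(\sqrt{\varepsilon})$, a reduced amplitude/conserved-quantity ODE, a leading-order heteroclinic and persistence in $(\varepsilon,\gamma)$. The genuine gap sits exactly in the step you flag as the main obstacle: how the critical \emph{quadratic} nonlinearities ($uv$ in \eqref{eq:SHe} and $\gamma\partial_x^2(u^2)$ in \eqref{eq:ConE}) are controlled in the center-manifold fixed-point argument. You propose the anisotropic amplitude rescaling ($u=\curlO(\varepsilon)$, $v=\curlO(\varepsilon^2)$) together with ``normal-form / near-identity transformations''. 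The normal-form route fails here: the offending terms are quadratic interactions of purely central modes, and all central eigenvalues tend to $0$ as $\varepsilon\rightarrow 0$, so the nonresonance condition $\snorm{\lambda_1+\lambda_2-\lambda_3}\geq C$ is violated and no $\curlO(1)$-bounded normal form transform exists (Remark \ref{rem:failiureNormalFormTransform}). Moreover, the rescaling alone is not enough: keeping the original variable $v$ and rescaling as in \eqref{eq:rescaling}, the term $U_cW_c$ in the $u$-equation contributes a Lipschitz constant of order $\varepsilon^{\gamma_w}$ and the term $\gamma\partial^2(u^2)$ in the conservation law one of order $\varepsilon^{2\gamma_u-\gamma_w}$; since the semigroup/integral estimates (Lemma \ref{lem:lipschitzConti} and the bounds following it) cost a factor $\varepsilon^{-1}$, the contraction requires $\gamma_w>1$ \emph{and} $2\gamma_u-\gamma_w>1$, hence $\gamma_u>1$, which contradicts the requirement $\gamma_u<1$ needed for the manifold to contain the $\curlO(\varepsilon)$ periodic state. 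So the plan as stated cannot produce a center manifold large enough to carry the periodic solutions and the front.

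The paper's resolution, which your proposal is missing, is the change of variables $\curlW=\curlV+\gamma\,\curlU^2$ performed \emph{before} the spatial-dynamics formulation: besides removing the $p$-derivatives from the nonlinearity, it converts the conservation-law nonlinearity into $-\gamma c\,\partial_\xi(\curlU^2)$ with $c=\varepsilon c_0$, see \eqref{eq:ConTransformed}, so this term carries an extra factor $\varepsilon$. Only with this gain, combined with the anisotropic exponents \eqref{eq:rescalingCoeffs}, do all nonlinear terms scale strictly better than $\varepsilon$, making the cut-off fixed-point map a contraction on an $\curlO(1)$ ball without any normal form (Remark \ref{rem:whydoesthiswork}). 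With that ingredient added, the rest of your outline matches the paper: projection onto the critical eigenvectors, a reduced system for $(\hat{A},\hat{B},\hat{W}_0)$, the Eckmann--Wayne front for $\gamma=0$, and persistence via transversal intersection of the relevant stable and unstable manifolds for small $\snorm{\gamma}$ and $\varepsilon$. Two smaller corrections: the zero eigenvalue coming from the conservation law is eliminated by integrating the $W_{00}$-equation once in $\xi$, so the connection problem is posed for $(\hat{A},\hat{B},\hat{W}_0)$ rather than in a full $6$-dimensional phase space, and the nontrivial end state is a normally hyperbolic \emph{circle} of equilibria (due to the phase invariance), not a hyperbolic fixed point, which is why the persistence argument is run through transversality rather than a plain implicit function theorem at a hyperbolic equilibrium.
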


\begin{rem}\label{rem:dependenceOnGamma}
	We point out that our rigorous result, Theorem \ref{thm:ModFrontsIntroduction}, only covers the case that $\gamma$ is contained in a small neighbourhood of zero.
	However, numerical experiments presented in Section \ref{sec:ReducedEquationsAndHeteroclinicConnections} show that we can expect that the result also holds for $\gamma \in (-3,\infty)$.
	Notably, we cannot expect that either small periodic patterns or small modulating fronts exist for $\gamma \leq -3$ since the reduced system does not have nontrivial fixed points in this parameter region.
	We refer to Section \ref{sec:periodicSolutions} for a detailed discussion.
\end{rem}

\begin{rem}\label{rem:differentModels}
	We note that a similar instability occurs in the models studied in \cite{matthewsCox00,schneiderZimmermann13,sukhtayev16ArXiv}.
	In fact, with a modulating front ansatz \eqref{eq:modfrontAnsatz} we find a similar spectral situation as for \eqref{eq:SHe}--\eqref{eq:ConE}, see Section 3.1.
	Namely, for $\varepsilon = 0$ infinitely many eigenvalues lie on the imaginary axis and depart with different velocities for $\varepsilon > 0$.
	Thus, we expect that similar results hold true in these cases.
\end{rem}

Although the strategy used to establish Theorem \ref{thm:ModFrontsIntroduction} is similar to the one used in \cite{colletEckmann86,eckmannWayne91,haragusSchneider99} the following new challenges occur.
First, the proof of the center manifold result, Theorem \ref{thm:centerManifold}, is more involved because of the additional spectral curve due to the conservation law, see Figure \ref{fig:spectralSituation}b.
Second, as can be expected from the corresponding amplitude equation, the reduced system on the center manifold is 3-dimensional instead of 2-dimensional which increases the difficulty of establishing a heteroclinic orbit.

We now comment on the differences and new ideas in the center manifold result in more detail.
Compared to the pure Swift-Hohenberg case \cite{colletEckmann86,eckmannWayne91}, new difficulties arise from the presence of quadratic nonlinearities instead of cubic ones.
Therefore, as discussed in \cite{haragusSchneider99}, the estimates for the semigroups and projections established in \cite{eckmannWayne91} seem insufficient to establish a large enough center manifold.
Haragus and Schneider \cite{haragusSchneider99} address this issue by rescaling the central and hyperbolic part with different powers of $\varepsilon$, i.e.\ $u = \varepsilon^\beta u_c + \varepsilon^\gamma u_h$, with $0 < \beta < 1 < \gamma < 2$ chosen appropriately, which is motivated by the fact that the corresponding periodic equilibrium has the same expansion.
Finally, they remove the critical quadratic terms using a normal form transformation.

However, it turns out that this is not a viable strategy in our case since not all critical quadratic terms can be eliminated using normal form transformation, see Remark \ref{rem:failiureNormalFormTransform}.
A closer analysis reveals that the resonating quadratic contributions come from the fact that quadratic interactions of central modes, i.e.\ modes with non-negative growth rate, give again central modes.
This is due to the additional spectral curve touching zero at $k = 0$ by virtue of the conserved quantity present in the system.
Note that this is also one of the main issues in the justification of the modified Ginzburg-Landau approximation of equations with conserved quantities in \cite{schneiderZimmermann13} since the quadratic interactions of central modes are not exponentially damped in contrast to pattern forming systems without conserved quantity.

In this paper, we solve the issues related to the quadratic contributions by using a similar rescaling as used by Haragus and Schneider \cite{haragusSchneider99}.
However, we additionally exploit $u_\text{per} = \curlO(\varepsilon)$ and $v_\text{per} = \curlO(\varepsilon^2)$ and thus we rescale $u$ and $v$ differently in terms of $\varepsilon$.
Moreover, we use the transformation $w = v + \gamma u^2$ which gives an additional $\varepsilon$ in the quadratic nonlinearity of the conservation law.
Note that this transformation does not eliminate the quadratic nonlinearites and hence is not a normal form transform.
It turns out that this is enough to treat all nonlinearities similar to cubic ones and therefore, normal form transform is not necessary, see also Remark \ref{rem:whydoesthiswork}.

\subsection{Outline}
The paper is organized as follows.
In Section \ref{sec:periodicSolutions} we formally derive the amplitude equation of \eqref{eq:SHe}--\eqref{eq:ConE} using a multiple scaling ansatz and show the existence of periodic solutions.
Following, we formulate the modulating front problem and prove a center manifold result including bounds on the size of the manifold in Section \ref{sec:problemAndCMR}.
Afterwards, Section \ref{sec:ReducedEquationsAndHeteroclinicConnections} contains the derivation of the reduced equations on the center manifold and the construction of heteroclinic orbits establishing the existence of modulating fronts for \eqref{eq:SHe}--\eqref{eq:ConE}.
Finally, in Section \ref{sec:discussion} we discuss related, open problems.

\section{Amplitude equation and spatially periodic equilibria}\label{sec:periodicSolutions}

We want to derive an amplitude equation for \eqref{eq:SHe}--\eqref{eq:ConE}.
For this, we note that the linearisation of \eqref{eq:SHe}--\eqref{eq:ConE} about the trivial solution $(u,v) = (0,0)$ has solutions of the form
\begin{align*}
	u(x,t) = e^{ikx} e^{\lambda_u(k) t} \text{ and } v(x,t) = e^{ikx} e^{\lambda_v(k) t},
\end{align*}
where $\lambda_u(k) = -(1-k^2)^2 + \alpha$ and $\lambda_v(k) = -k^2$, with $k \in \R$, which corresponds to the spectrum depicted in Figure \ref{fig:spectralSituation}b.
Therefore for $\alpha \leq 0$ the ground state $(u,v) = (0,0)$ is stable while for $\alpha > 0$ it gets unstable.
Since we are interested in solutions bifurcating from the ground state when $\alpha$ gets positive, we set $\alpha = \varepsilon^2 \alpha_0$ for some $\alpha_0 > 0$.
We now derive the formal amplitude equation close to the instability with the ansatz
\begin{align*}
	u(x,t) &= \varepsilon A(X, T) e^{ix} + \varepsilon \overline{A}(X, T) e^{-ix}, \\
	v(x,t) &= \varepsilon^2 B_0(X, T) + \varepsilon^2 B_1(X,T) e^{2ix} + \varepsilon^2 \overline{B}_1(X,T) e^{-2ix},
\end{align*}
where $X = \varepsilon x$ and $T = \varepsilon^2 t$.
Inserting this ansatz into the system \eqref{eq:SHe}--\eqref{eq:ConE} then yields
\begin{subequations}
    \begin{align}
        0 &= \varepsilon^3 E (-\partial_T A + 4 \partial_X^2 A + \alpha_0 A + A B_0 + \bar{A} B_1 - 3 \snorm{A}^2 A) + \varepsilon^3 E^3 (AB_1 + A^3) + c.c. + \curlO(\varepsilon^4), \label{eq:formalDerivation1}\\
        0 &= \varepsilon^4 (-\partial_T B_0 + \partial_X^2 B_0 + 2 \gamma \partial_X^2(\snorm{A}^2)) + \varepsilon^4 E^2(-\partial_T B_1 + \partial_X^2 B_1 + \gamma \partial_X^2 (A^2)) \nonumber\\
        &\qquad+ 4 i \varepsilon^3 E^2 (\partial_X B_1 + \gamma \partial_X (A^2)) - 4 \varepsilon^2 E^2 (B_1 + \gamma A^2) + c.c. + \curlO(\varepsilon^5),\label{eq:formalDerivation2}
    \end{align}
\end{subequations}
where $E = e^{ix}$.
Equating the $\varepsilon^2 E^2$ contributions in \eqref{eq:formalDerivation2} to zero then yields $B_1 = -\gamma A^2$.
With this, equating the $\varepsilon^3 E$-terms in \eqref{eq:formalDerivation1} and the $\varepsilon^4$-terms in \eqref{eq:formalDerivation2} to zero leads to the formal amplitude equations
\begin{subequations}
	\begin{align}
		\partial_T A &= 4 \partial_X^2 A + \alpha_0 A + A B_0 - (3+\gamma) A \snorm{A}^2\label{eq:amplitudeEquation1}, \\
		\partial_T B_0 &= \partial_X^2 B_0 + 2\gamma \partial_X^2 \snorm{A}^2. \label{eq:amplitudeEquation2}
	\end{align}
\end{subequations}
Note that the extended ansatz for $v$ includes noncritical modes, i.e., modes which correspond to negative eigenvalues.
However, these additional terms are necessary in the derivation to equate the coefficient in front of $\varepsilon^2 e^{2ix}$ in \eqref{eq:formalDerivation2} to zero.
This contribution then leads to a nontrivial contribution to the cubic term in \eqref{eq:amplitudeEquation1}.
Note that this system deviates from the one derived in \cite{matthewsCox00} since the sign of the cubic coefficient in \eqref{eq:amplitudeEquation1} depends on the coupling parameter $\gamma$.
Specifically, for $\gamma < -3$ the sign is positive which leads to the non-existence of nontrivial equilibria in \eqref{eq:amplitudeEquation1}--\eqref{eq:amplitudeEquation2}.
Indeed this formal prediction reflects in the full system as for $\gamma \leq -3$ the full system \eqref{eq:SHe}--\eqref{eq:ConE} does not have stationary periodic solutions of small amplitude.
Making these formal ideas rigorous we obtain the following result.

\begin{lem}\label{lem:periodicSols}
	For all $\gamma_0 > -3$ and $\alpha_0 > 0$ there exists an $\varepsilon_0 > 0$ such that for all $\varepsilon \in (0,\varepsilon_0)$ and $\gamma \in (\gamma_0,\infty)$ the system \eqref{eq:SHe}--\eqref{eq:ConE} has stationary periodic solutions of the form
	\begin{subequations}
		\begin{align}
			u_\text{per}(x) &= \varepsilon 2 \sqrt{\dfrac{\alpha_0 - q_0^2}{3 + \gamma}} \cos(k_c (x + x_0)) + \mathcal{O}(\varepsilon^2), \label{eq:Uper} \\
			v_\text{per}(x) &= -\varepsilon^2 2 \dfrac{(\alpha_0 - q_0^2) \gamma}{3+\gamma} \cos( 2k_c(x+x_0)) + \mathcal{O}(\varepsilon^3), \label{eq:Vper}
		\end{align}
	\end{subequations}
	where $x_0 \in [0,2\pi/k_c)$ and $k_c^2 = 1 + \varepsilon q_0$.
	Here, $k_c$ is the critical wave number and $q_0 \in (-\sqrt{\alpha_0}, \sqrt{\alpha_0})$.
	Furthermore, $\int_0^{2\pi/k_c} v_\text{per} \d x = 0$.
\end{lem}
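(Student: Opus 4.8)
The plan is to set up the equation for stationary periodic solutions and solve it by Lyapunov--Schmidt reduction. First I would look for stationary solutions of \eqref{eq:SHe}--\eqref{eq:ConE} that are $2\pi/k_c$-periodic in $x$. Setting $\partial_t u = \partial_t v = 0$ in \eqref{eq:ConE} and integrating twice over a period gives $\partial_x^2(v + \gamma u^2) = 0$, hence $v = -\gamma u^2 + ax + b$; periodicity forces $a = 0$, and the normalization $\int_0^{2\pi/k_c} v_\text{per}\,dx = 0$ pins down $b = \gamma \,\langle u^2\rangle$ where $\langle\cdot\rangle$ denotes the average over one period. Substituting this into \eqref{eq:SHe} eliminates $v$ and leaves a single scalar equation
\begin{align*}
	0 = -(1+\partial_x^2)^2 u + \varepsilon^2\alpha_0 u - \gamma u^3 + \gamma \langle u^2\rangle\, u - u^3
\end{align*}
for a $2\pi/k_c$-periodic function $u$. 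I would rescale $x$ so that the period becomes $2\pi$, i.e.\ write everything in terms of $k_c x$, turning $(1+\partial_x^2)^2$ into $(1 + k_c^2\partial_x^2)^2 = ((k_c^2-1)\partial_x^2 + (1+\partial_x^2)k_c^2\cdots)$; more cleanly, since $k_c^2 = 1 + \varepsilon q_0$, the operator $(1+\partial_x^2)^2$ acting on functions of period $2\pi/k_c$ has Fourier symbol $(1 - k_c^2 n^2)^2$ at mode $n$, which vanishes to order $\varepsilon^2$ at $n = \pm 1$ and is $\mathcal{O}(1)$ elsewhere. This is the classical Turing setup with a translation mode at $n = \pm 1$.

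Next I would carry out Lyapunov--Schmidt reduction on the space of even, $2\pi/k_c$-periodic functions (using the reflection symmetry $x \mapsto -x$ to kill the phase $x_0$, which is then reinserted by translation invariance at the end). Write $u = \varepsilon a \cos(k_c x) + \varepsilon^2 w$ with $w$ in the complement of $\cos(k_c x)$; insert into the scalar equation and separate the $\cos(k_c x)$-component (the bifurcation equation) from its complement (the range equation). On the complement the linear operator $(1+\partial_x^2)^2$ is boundedly invertible with $\mathcal{O}(1)$ inverse, so the implicit function theorem solves $w = w(a,\varepsilon,\gamma,q_0)$ with $w = \mathcal{O}(1)$; the cubic term $\varepsilon^3 a^3\cos^3(k_c x) = \varepsilon^3 a^3(\tfrac34\cos(k_c x) + \tfrac14\cos(3k_c x))$ feeds a $\cos(3k_c x)$-contribution into $w$ at leading order, while $\langle u^2\rangle = \tfrac12\varepsilon^2 a^2 + \mathcal{O}(\varepsilon^3)$ contributes to the average. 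Projecting onto $\cos(k_c x)$ and dividing by $\varepsilon^3$ yields the reduced bifurcation equation
\begin{align*}
	0 = (\alpha_0 - q_0^2) a - \tfrac34(3+\gamma) a^3 + \mathcal{O}(\varepsilon),
\end{align*}
where the $q_0^2$ comes from expanding $(1 - k_c^2)^2 = \varepsilon^2 q_0^2$ at $n=1$ and the factor $(3+\gamma)$ collects the direct cubic term and the $\langle u^2\rangle u$ term together with the $\cos(3k_c x)$-resonance. For $\gamma > -3$ and $q_0^2 < \alpha_0$ this has the nonzero solution $a = 2\sqrt{(\alpha_0 - q_0^2)/(3+\gamma)} + \mathcal{O}(\varepsilon)$, and another application of the implicit function theorem (the derivative of the reduced equation in $a$ at this root is nonzero) produces $a = a(\varepsilon)$ for all small $\varepsilon$. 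Back-substituting gives \eqref{eq:Uper}, and then $v_\text{per} = -\gamma u_\text{per}^2 + \gamma\langle u_\text{per}^2\rangle$ together with $\cos^2(k_c(x+x_0)) = \tfrac12 + \tfrac12\cos(2k_c(x+x_0))$ gives exactly \eqref{eq:Vper}, with the average automatically zero by construction.

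The main obstacle is getting the coefficient of the cubic term right, i.e.\ verifying that the combination is precisely $\tfrac34(3+\gamma)$ (equivalently $(3+\gamma)$ after the scaling normalization used in the statement). Two effects must be tracked simultaneously and with correct signs: the nonlocal term $\gamma\langle u^2\rangle u$ coming from the elimination of $v$ (this is what deforms the pure Swift--Hohenberg coefficient $3$), and the second-harmonic feedback, where the $\cos(2k_c x)$-part of $v_\text{per}$ — itself of order $\varepsilon^2$ and proportional to $-\gamma$ — couples back through the $uv$ term in \eqref{eq:SHe} into the $\cos(k_c x)$-mode; one has to check that this does not alter the leading $\varepsilon^3$ balance beyond what is already captured, consistent with the formal computation $B_1 = -\gamma A^2$ leading to the $(3+\gamma)$ in \eqref{eq:amplitudeEquation1}. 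Care is also needed to confirm that the $\mathcal{O}(\varepsilon)$ remainders in the bifurcation equation are genuinely smooth in $(\varepsilon,\gamma,q_0,a)$ so that the implicit function theorem applies uniformly for $\gamma \in (\gamma_0,\infty)$; the restriction $\gamma_0 > -3$ is exactly what keeps the leading coefficient bounded away from zero on that range.
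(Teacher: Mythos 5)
Your route is genuinely different from the paper's, and it is viable. The paper treats \eqref{eq:SHe}--\eqref{eq:ConE} as an evolution equation on a space of $2\pi/k_c$-periodic functions with the constraint $v_0=0$, applies the center manifold theorem of Haragus--Iooss (the central modes being the $n=\pm1$ Fourier modes of $u$), computes $h_v=-\gamma\varepsilon^2(A^2e^{2ik_cx}+\bar A^2e^{-2ik_cx})+\mathcal{O}(\varepsilon^4)$, arrives at the reduced equation $\partial_TA=(\alpha_0-q_0^2)A-(3+\gamma)A\snorm{A}^2+g(A,\varepsilon)$, and then finds the equilibrium via rotation invariance, polar coordinates and the implicit function theorem. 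You instead observe that for \emph{stationary} periodic solutions the conservation law integrates exactly, $v=-\gamma u^2+\gamma\langle u^2\rangle$ under the zero-mean normalization, which collapses the problem to one scalar equation for $u$ plus a standard Lyapunov--Schmidt reduction. This is more elementary, avoids the center-manifold machinery, and makes the condition $\int_0^{2\pi/k_c}v_\text{per}\,dx=0$ transparent; the paper's choice has the advantage of reusing exactly the framework on which the rest of the paper is built and of producing the reduced dynamics rather than only equilibria. Note also that the ``second-harmonic feedback'' you list as a separate check is already fully contained in the exact substitution $uv=-\gamma u^3+\gamma\langle u^2\rangle u$, and the $\cos(3k_cx)$ component only feeds the range variable $w$, whose back-coupling enters beyond order $\varepsilon^3$; neither requires extra tracking at leading order.

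There is, however, an arithmetic slip at precisely the point you identify as the crux. Projecting $-(1+\gamma)u^3+\gamma\langle u^2\rangle u$ with $u=\varepsilon a\cos(k_cx)+\mathcal{O}(\varepsilon^2)$ onto $\cos(k_cx)$ gives
\begin{align*}
-\tfrac{3}{4}(1+\gamma)a^3+\tfrac{1}{2}\gamma a^3=-\tfrac{1}{4}(3+\gamma)a^3,
\end{align*}
so the bifurcation equation reads $0=(\alpha_0-q_0^2)a-\tfrac{1}{4}(3+\gamma)a^3+\mathcal{O}(\varepsilon)$, not $0=(\alpha_0-q_0^2)a-\tfrac{3}{4}(3+\gamma)a^3+\mathcal{O}(\varepsilon)$. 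The root you then quote, $a=2\sqrt{(\alpha_0-q_0^2)/(3+\gamma)}$, solves the corrected equation but not the one you wrote (yours would give $a=\tfrac{2}{\sqrt{3}}\sqrt{(\alpha_0-q_0^2)/(3+\gamma)}$, contradicting \eqref{eq:Uper} and, through $v=-\tfrac{1}{2}\gamma\varepsilon^2a^2\cos(2k_c(x+x_0))+\mathcal{O}(\varepsilon^3)$, also \eqref{eq:Vper}). With the coefficient fixed the rest of your argument goes through and is consistent with the paper's complex normalization: setting $a=2\snorm{A}$ turns $\tfrac{1}{4}(3+\gamma)a^3$ into the $(3+\gamma)A\snorm{A}^2$ of \eqref{eq:redDyn}.
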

\begin{proof}
    The proof is based on center manifold reduction.
    We define the space
    \begin{align*}
        H^l_\text{per} := \left\{u(x) = \sum_{n \in \Z} u_n e^{ink_c x} \,:\, \norm[H^l_\text{per}]{u}^2 = \sum_{n \in \Z} (1+n^2)^l \snorm{u_n}^2 < \infty\right\} 
    \end{align*}
    and
	\begin{align*}
		\curlZ &= \left\{(u,v) \in H^4_\text{per} \times H^2_\text{per} \,:\, u_n = \bar{u}_{-n},\, v_n = \bar{v}_{-n},\, v_0 = 0\right\}, \\
		\curlY &= \left\{(u,v) \in H^2_\text{per} \times H^2_\text{per} \,:\, u_n = \bar{u}_{-n},\, v_n = \bar{v}_{-n},\, v_0 = 0\right\}, \\
		\curlX &= \left\{(u,v) \in H^0_\text{per} \times H^0_\text{per} \,:\, u_n = \bar{u}_{-n},\, v_n = \bar{v}_{-n},\, v_0 = 0\right\},
	\end{align*}
	where $u_n$ denotes the $n$-th Fourier coefficient of $u$, $n \in \Z$ and $\bar{u}$ is the complex conjugate of $u$.
	Furthermore, note that the condition $v_0 = \int_0^{2\pi/k_c} v(x) \d x = 0$ is conserved by the dynamic of \eqref{eq:SHe}--\eqref{eq:ConE}.
	Then, we write \eqref{eq:SHe}--\eqref{eq:ConE} as
	\begin{align*}
		\partial_t \begin{pmatrix}
			u \\ v
		\end{pmatrix} = L_\varepsilon \begin{pmatrix}
			u \\ v
		\end{pmatrix} + R(u,v),
	\end{align*}
	where $L_\varepsilon$ is the linear part and $R$ the nonlinear part.
	Here, the linear operator $L_\varepsilon$ is given by
	\begin{align*}
		L_\varepsilon = \begin{pmatrix}
			-(1+\partial_x^2)^2 + \varepsilon^2 \alpha_0 & 0 \\
			0 & \partial_x^2
		\end{pmatrix} \in \curlL(\curlZ; \curlX),
	\end{align*}
	where $\curlL(\curlZ; \curlX)$ is the space of bounded, linear operators from $\curlZ$ to $\curlX$.
	Furthermore, the nonlinearity is given by
	\begin{align*}
		R(u,v) = \begin{pmatrix}
			uv - u^3 \\
			\gamma \partial_x^2 (u^2)
		\end{pmatrix},
	\end{align*}
	which is a smooth map from $\curlZ$ to $\curlY$.
	Note in particular that $R(0,0) = 0$ and $DR(0,0) = 0$.
	The eigenvalues of $L_\varepsilon$ can be explicitly calculated using Fourier transform as
	\begin{align*}
		\lambda_{u,n} &= -(1-n^2 k_c^2)^2 + \varepsilon^2 \alpha_0, \quad n \in \Z \\
		\lambda_{v,n} &= -n^2 k_c^2, \quad n \in \Z \setminus \{0\}.
	\end{align*}
	Note, that $\lambda_{v,0} = 0$ is not an eigenvalue since we require $v_0 = 0$.
	Thus, the spectrum can be decomposed in the central part $\sigma_c(L_\varepsilon) = \{\lambda_{u,\pm 1}\}$ and the remaining hyperbolic part $\sigma_h(L_\varepsilon)$.
	Finally, we have for $\snorm{\omega} \geq \omega_0 > 0$ that
	\begin{align*}
		\norm[\curlL(\curlX;\curlX)]{(i\omega I - L_\varepsilon)^{-1}} \leq C(\sup_{n \in \Z} \snorm{i\omega + (1-n^2k_c^2)^2 - \varepsilon^2 \alpha_0}^{-1} + \sup_{n \in \Z}\snorm{i\omega - n^2 k_c^2}^{-1}) \leq \dfrac{C(\omega_0)}{\snorm{\omega}}.
	\end{align*}
	Thus, since $\curlZ$, $\curlY$ and $\curlX$ are Hilbert spaces, we can apply the center manifold theorem \cite[Theorem 3.3]{haragusIooss11} using \cite[Remark 3.6]{haragusIooss11}.
	This gives a smooth map $h = h(u_1,u_{-1},\varepsilon) = h_u \times h_v$, which defines the center manifold.
	We then introduce the following coordinates
	\begin{align*}
		u(t) &= \varepsilon A(\varepsilon^2 t) e^{ik_c x} + \varepsilon \bar{A}(\varepsilon^2 t) e^{-ik_c x} + h_u(\varepsilon A(\varepsilon^2 t), \varepsilon \bar{A}(\varepsilon^2 t),\varepsilon), \\
		v(t) &= h_v(\varepsilon A(\varepsilon^2 t), \varepsilon \bar{A}(\varepsilon^2 t),\varepsilon).
	\end{align*}
	To determine an approximation of $h_u$, $h_v$ we use \cite[Corollary 2.12]{haragusIooss11} and obtain
	\begin{align*}
		\varepsilon^2 D_A h_u \partial_T A &= -(1+\partial_x^2)^2 h_u + \varepsilon^2 \alpha_0 h_u + P_{h,u}\left((\varepsilon A e^{ik_c x} + \varepsilon \bar{A} e^{-i k_c x} + h_u)h_v + (\varepsilon A e^{ik_c x} + \varepsilon \bar{A} e^{-i k_c x} + h_u)^3\right),
	\end{align*}
	where $P_{h,u}$ is the projection onto the hyperbolic eigenspace of the $u$-equation, $D_Ah_u$ denotes the derivative of $h_u$ with respect to $A$ and $T = \varepsilon^2 t$.
	Using that $h_u, h_v = \curlO(\varepsilon^2)$ since they are at least quadratic in $A$, the above equation yields that $h_u = \curlO(\varepsilon^3)$.
	For $h_v$ we find
	\begin{align*}
		\varepsilon^3 D_A h_v \partial_T A = \partial_x^2 h_v + \partial_x^2((\varepsilon A e^{ik_c x} + \varepsilon \bar{A} e^{-i k_c x} + h_u)^2),
	\end{align*}
	where we used that all $\lambda_{v,n}$ are in the hyperbolic part of the spectrum.
	Since $h_u = \curlO(\varepsilon^3)$ we find by equating the $\varepsilon^2$-terms in the above equation to zero that
	\begin{align*}
		h_v = -\gamma \varepsilon^2(A^2 e^{2ik_c x} + \bar{A}^2 e^{-2ik_c x}) + \curlO(\varepsilon^4).
	\end{align*}
	Thus, the reduced equation on the center manifold for $A$ is given by
	\begin{align}
		\partial_T A = (\alpha_0 - q_0^2) A - (3+\gamma) A\snorm{A}^2 + g(A, \varepsilon),
		\label{eq:redDyn}
	\end{align}
	with $g(A,\varepsilon) = \curlO(\varepsilon^2)$.
	For $\varepsilon = 0$ and $\gamma > -3$, \eqref{eq:redDyn} has a nontrivial stationary state $\snorm{A}^2 = \frac{\alpha_0 - q_0^2}{3+\gamma}$.
	It remains to show that there exists a stationary state nearby for $\varepsilon > 0$ small.
	We proceed similar to \cite[Theorem 13.2.2]{schneiderUecker17} and use the translational invariance of \eqref{eq:SHe}--\eqref{eq:ConE}, which yields that \eqref{eq:redDyn} is invariant with respect to $A \mapsto A e^{iy}$, $y \in \R$.
	Therefore, $g(A,\varepsilon) = A \tilde{g}(\snorm{A}^2,\varepsilon)$ and hence, we may write $A$ in polar coordinates, i.e. $A = r e^{i\phi}$, and obtain
	\begin{align*}
		\partial_T r = (\alpha_0 - q_0^2)r - (3+\gamma) r^3 + r \tilde{g}(r^2,\varepsilon), \quad \partial_T \phi = 0.
	\end{align*}
	Thus, any stationary state satisfies
	\begin{align*}
		G(r,\varepsilon) := (\alpha_0 - q_0^2) r - (3+\gamma) r^3 + r\tilde{g}(r^2,\varepsilon) = 0.
	\end{align*}
	Since
	\begin{align*}
		G\left(\pm \sqrt{(\alpha_0-q_0^2)/(3+\gamma)},0\right) = 0 \text{ and } \partial_r G\left(\pm \sqrt{(\alpha_0-q_0^2)/(3+\gamma)},0\right) \neq 0
	\end{align*}
	we can apply the implicit function theorem to obtain stationary states $r_\pm(\varepsilon) = \pm \sqrt{(\alpha_0-q_0^2)/(3+\gamma)} + \curlO(\varepsilon^2)$.
	Using the coordinates on the center manifold, this concludes the proof.
\end{proof}

\section{Formulation of the problem and center manifold reduction}\label{sec:problemAndCMR}
We now turn to the existence of modulating front solutions of the form \eqref{eq:modfrontAnsatz} connecting the trivial solution $(u,v) = (0,0)$ to the periodic solution established in Lemma \ref{lem:periodicSols}.
The aim of this section is to establish the spatial dynamics formulation and show that there is a center manifold which contains the periodic solutions.
Thus, we make the ansatz
\begin{align*}
	u(t,x) = \curlU(\xi,p) \text{ and } v(t,x) = \curlV(\xi,p),
\end{align*}
where $\xi = x - ct$ and $p = x$, and $\curlU, \curlV$ are $2\pi / k_c$-periodic with respect to their second argument and satisfy
\begin{align*}
	\lim_{\xi \rightarrow +\infty} (\curlU, \curlV)(\xi,p) = (0,0) \text{ and } \lim_{\xi \rightarrow -\infty} (\curlU,\curlV)(\xi,p) = (u_\text{per}, v_\text{per})(p).
\end{align*}
Inserting this ansatz into \eqref{eq:SHe}--\eqref{eq:ConE} and using $\partial_t = -c \partial_\xi$ and $\partial_x = \partial_\xi + \partial_p$ we obtain
\begin{align*}
	-c \partial_\xi \curlU &= -(1+(\partial_\xi +\partial_p)^2)^2 \curlU + \alpha \curlU + \curlU \curlV - \curlU^3, \\
	-c \partial_\xi \curlV &= (\partial_\xi + \partial_p)^2 \curlV + \gamma (\partial_\xi + \partial_p)^2 (\curlU^2).
\end{align*}
Note that the nonlinearity contains $p$-derivatives.
To avoid this, we introduce the transformation $\curlW := \curlV + \gamma \curlU^2$ which gives the equivalent transformed system
\begin{subequations}
	\begin{align}
		-c \partial_\xi \curlU &= -(1+(\partial_\xi +\partial_p)^2)^2 \curlU + \alpha \curlU + \curlU \curlW - (1 + \gamma) \curlU^3, \label{eq:SHeTransformed}\\
		-c \partial_\xi \curlW &= (\partial_\xi + \partial_p)^2 \curlW - \gamma c \partial_\xi (\curlU^2). \label{eq:ConTransformed}
	\end{align}
\end{subequations}
Another consequence of the transformation $\curlW := \curlV + \gamma \curlU^2$ is the presence of the front velocity $c$ in the nonlinear term in \eqref{eq:ConTransformed}.
Especially with the choice $c = \varepsilon c_0$ this depends on $\varepsilon$.
It turns out this is crucial for the construction of a center manifold of sufficient size, see Remark \ref{rem:whydoesthiswork}.

\subsection{Spatial dynamics formulation and spectrum}\label{subsec:spatialformulation}

Since $\curlU(\xi,\cdot), \curlW(\xi,\cdot)$ are $2\pi/k_c$-periodic, we make a Fourier ansatz
\begin{align*}
	\curlU(\xi, p) = \sum_{n \in \Z} \curlU_n(\xi) e^{ink_cp} \text{ and } \curlW(\xi, p) = \sum_{n \in \Z} \curlW_n(\xi) e^{ink_cp},
\end{align*}
where $\curlU_n = \bar{\curlU}_{-n}$ and $\curlW_n = \bar{\curlW}_{-n}$ since $\curlU, \curlW$ are real.
Inserting this ansatz into \eqref{eq:SHeTransformed}--\eqref{eq:ConTransformed} gives an infinite dimensional ODE system with respect to the dynamic variable $\xi$
\begin{subequations}
	\begin{align}
		-c \partial_\xi \curlU_n &= -(1+(\partial_\xi +ink_c)^2)^2 \curlU_n + \alpha \curlU_n + \sum_{l+k = n} \curlU_l \curlW_k - (1 + \gamma) \sum_{k+l+m=n} \curlU_k \curlU_l \curlU_m, \label{eq:SHeODE}\\
		-c \partial_\xi \curlW_n &= (\partial_\xi + ink_c)^2 \curlW_n - \gamma c \partial_\xi \sum_{k+l=n} \curlU_k \curlU_l, \label{eq:ConODE}
	\end{align}
\end{subequations}
with $n \in \Z$. 
We write \eqref{eq:SHeODE} and \eqref{eq:ConODE} as a first order system
\begin{align}
	\partial_\xi \begin{pmatrix}
		U_n \\ W_n
	\end{pmatrix} = L_n \begin{pmatrix}
		U_n \\ W_n
	\end{pmatrix} + \curlN_n(U,W), 
	\label{eq:spatDynSys}
\end{align}
where $(U_n,W_n)^T = (U_{n0},U_{n1},U_{n2},U_{n3},W_{n0},W_{n1})^T \in \C^6$ with $U_{nj} = \partial_\xi^j \curlU_n$ for $j = 0,1,2,3$ and $W_{nj} = \partial_\xi^j \curlW_n$ for $j = 0,1$.
Since all coupling terms are nonlinear, the linear part $L_n$ is given by
\begin{align*}
	L_n = \begin{pmatrix}
	L_n^\text{SH} & 0 \\
	0 & L_n^\text{con}
	\end{pmatrix} = \begin{pmatrix}
		0 & 1 & 0 & 0 & 0 & 0 \\
		0 & 0 & 1 & 0 & 0 & 0 \\
		0 & 0 & 0 & 1 & 0 & 0 \\
		A_n & B_n & C_n & D_n & 0 & 0 \\
		0 & 0 & 0 & 0 & 0 & 1 \\
		0 & 0 & 0 & 0 & G_n & H_n
	\end{pmatrix} \in \C^{6 \times 6},
\end{align*}
where $A_n = -(1-\mu^2)^2 + \alpha$, $B_n = -4i\mu (1-\mu^2) + c$, $C_n = 6\mu^2 - 2$, $D_n = -4i\mu$, $G_n = \mu^2$ and $H_n = 2i\mu - c$.
Here, we used the abbreviation $\mu = nk_c$ for notational simplicity.
The nonlinearity $\curlN_n$ is given by
\begin{align*}
	\curlN_n(U,W) =	\left(0,0,0,\sum_{p+q=n} U_{p0} W_{q0} - (1+\gamma) \sum_{p+q+r=n} U_{p0} U_{q0} U_{r0}, 0, 2 c \gamma \sum_{p+q =n} U_{p0} U_{q1}\right)^T
\end{align*}
for all $n \in \Z$.

\subsubsection{Spectrum of the $L_n$}
Following \cite{eckmannWayne91}, we set $k_c = 1$ for simplicity in what follows.
This yields $\mu = n$.
However, the results remain true for $k_c = 1 + \varepsilon^2 q_0$, with $q_0 \in (-\sqrt{\alpha_0}, \sqrt{\alpha_0})$ similar to Lemma \ref{lem:periodicSols}.
Recall that the $L_n$ have a blockmatrix structure, i.e.\ 
\begin{align*}
	L_n = \begin{pmatrix}
		L_n^\text{SH} & 0 \\
		0 & L_n^\text{con}
	\end{pmatrix},
\end{align*}
with $L_n^\text{SH}$ originates from \eqref{eq:SHeODE} and $L_n^\text{con}$ originates from \eqref{eq:ConODE}.
Thus, we can analyse the spectrum for each block individually.
To do so, we set $\alpha = \varepsilon^2 \alpha_0$ and $c = \varepsilon c_0$ for some $\alpha_0, c_0 > 0$.
We find for $\varepsilon = 0$ that $L_n^\text{SH}$ has two double eigenvalues $\lambda_{n,\pm} = -i(n \pm 1)$ corresponding to a Jordan block of size 2.
For $\varepsilon > 0$, we find with $\Delta = \sqrt{c_0^2 - 16\alpha_0}$ that
\begin{align}
	\lambda_{n,+}^\pm &= \begin{dcases}
			\varepsilon \dfrac{-c_0 \pm \Delta}{8} + \curlO(\varepsilon^2), & n = -1, \\
			-i(n + 1) \pm \varepsilon^{1/2} i^{3/2} \dfrac{\sqrt{c_0(n + 1)}}{2} + \curlO(\varepsilon), & n \neq -1,
		\end{dcases} \label{eq:eigenvalueSH1}\\
	\lambda_{n,-}^\pm &= \begin{dcases}
			\varepsilon \dfrac{-c_0 \pm \Delta}{8} + \curlO(\varepsilon^2), & n = 1, \\
			-i(n - 1) \pm \varepsilon^{1/2} i^{3/2} \dfrac{\sqrt{c_0(n - 1)}}{2} + \curlO(\varepsilon), & n \neq 1.
		\end{dcases}
		\label{eq:eigenvalueSH2}
\end{align}
Similarly, we find for $L_n^\text{con}$ that for $n \neq 0$ the eigenvalues are given by
\begin{align}
	\nu_{n,\pm} = in \pm \varepsilon^\frac{1}{2} i^\frac{3}{2} \sqrt{n c_0} + \curlO(\varepsilon)
	\label{eq:eigenvalueCon}
\end{align}
while for $n = 0$ we have two eigenvalues $\nu_{0,+} = 0$ and $\nu_{0,-} = -\varepsilon c_0$.
Here, the zero eigenvalue at $n = 0$ comes from the fact that \eqref{eq:ConODE} is a conservation law.
The calculation leading to \eqref{eq:eigenvalueSH1}--\eqref{eq:eigenvalueCon} are given in Appendix \ref{app:eigenvalues}.

Summarizing, there are 6 ``central'' eigenvalues with real part $\curlO(\varepsilon)$ while the rest of the spectrum has real part $\curlO(\varepsilon^{1/2})$.
This is illustrated in Figure \ref{fig:spectrum}.
We thus have a similar mechanism as for the pure Swift-Hohenberg equation.
For $\varepsilon = 0$ we have a purely imaginary spectrum and as $\varepsilon$ gets positive the eigenvalues depart from the imaginary axis with different velocities which allows us to construct a center manifold.
However, the size of this center manifold will depend on $\varepsilon$ and the remainder of this section is devoted to the construction of a center manifold that is large enough to contain the spatially periodic equilibria from Lemma \ref{lem:periodicSols}.

\begin{figure}
	\centering
	\begin{tikzpicture}[scale=1]
		\fill[domain=2:3,samples=100,color={rgb:black,1;white,2},fill={rgb:black,1;white,2}] (2,0) -- plot(\x,{(\x-1.75)*(\x-1.75)+0.9375}) -- (3,0) -- cycle;
		\fill[domain=2:3,samples=100,color={rgb:black,1;white,2},fill={rgb:black,1;white,2}] (2,0) -- plot(\x,{-(\x-1.75)*(\x-1.75)-0.9375}) -- (3,0) -- cycle;
		\fill[domain=-3:-2,samples=100,color={rgb:black,1;white,2},fill={rgb:black,1;white,2}] (-3,0) -- plot(\x,{(\x+1.75)*(\x+1.75)+0.9375}) -- (-2,0) -- cycle;
		\fill[domain=-3:-2,samples=100,color={rgb:black,1;white,2},fill={rgb:black,1;white,2}] (-3,0) -- plot(\x,{-(\x+1.75)*(\x+1.75)-0.9375}) -- (-2,0) -- cycle;
		\draw[-Stealth] (-3,0) -- (3,0) node[below] {$\operatorname{Re}$};
		\draw[-Stealth] (0,-3) -- (0,3) node[right] {$\operatorname{Im}$};
		\draw[domain=2:3,samples=100] plot(\x,{(\x-1.75)*(\x-1.75)+0.9375});
		\draw[domain=2:3,samples=100] plot(\x,{-(\x-1.75)*(\x-1.75)-0.9375});
		\draw[-] (2,-1) -- (2,1);
		\draw[domain=-3:-2,samples=100] plot(\x,{(\x+1.75)*(\x+1.75)+0.9375});
		\draw[domain=-3:-2,samples=100] plot(\x,{-(\x+1.75)*(\x+1.75)-0.9375});
		\draw[-] (-2,-1) -- (-2,1);
		
		\draw[Stealth-Stealth,dashed] (0,1)--(1,1) node[above]{$\mathcal{O}(\varepsilon^{1/2})$}--(2,1);
		
		\draw (-0.5,0) node[cross=3]{};
		\draw (-0.5,0) circle (3pt);
		\draw (-1,0) node[cross=3]{};
		\draw (-1,0) circle (3pt);
		\draw[Stealth-Stealth,dashed] (0,0.5)--(-0.5,0.5) node[above]{$\mathcal{O}(\varepsilon)$}--(-1,0.5);
	\end{tikzpicture}
	\hspace{1cm}
	\begin{tikzpicture}[scale=1]
		\fill[domain=2:3,samples=100,color={rgb:black,1;white,2},fill={rgb:black,1;white,2}] (2,0) -- plot(\x,{(\x-1.75)*(\x-1.75)+0.9375}) -- (3,0) -- cycle;
		\fill[domain=2:3,samples=100,color={rgb:black,1;white,2},fill={rgb:black,1;white,2}] (2,0) -- plot(\x,{-(\x-1.75)*(\x-1.75)-0.9375}) -- (3,0) -- cycle;
		\fill[domain=-3:-2,samples=100,color={rgb:black,1;white,2},fill={rgb:black,1;white,2}] (-3,0) -- plot(\x,{(\x+1.75)*(\x+1.75)+0.9375}) -- (-2,0) -- cycle;
		\fill[domain=-3:-2,samples=100,color={rgb:black,1;white,2},fill={rgb:black,1;white,2}] (-3,0) -- plot(\x,{-(\x+1.75)*(\x+1.75)-0.9375}) -- (-2,0) -- cycle;
		\draw[-Stealth] (-3,0) -- (3,0) node[below] {$\operatorname{Re}$};
		\draw[-Stealth] (0,-3) -- (0,3) node[right] {$\operatorname{Im}$};
		\draw[domain=2:3,samples=100] plot(\x,{(\x-1.75)*(\x-1.75)+0.9375});
		\draw[domain=2:3,samples=100] plot(\x,{-(\x-1.75)*(\x-1.75)-0.9375});
		\draw[-] (2,-1) -- (2,1);
		\draw[domain=-3:-2,samples=100] plot(\x,{(\x+1.75)*(\x+1.75)+0.9375});
		\draw[domain=-3:-2,samples=100] plot(\x,{-(\x+1.75)*(\x+1.75)-0.9375});
		\draw[-] (-2,-1) -- (-2,1);
		\draw[Stealth-Stealth,dashed] (0,1)--(1,1) node[above]{$\mathcal{O}(\varepsilon^{1/2})$}--(2,1);
		
		\draw (-0.75,0) node[cross=3]{};
		\draw (0,0) node[cross=3]{};
		\draw[Stealth-Stealth,dashed] (0,0.5)--(-0.375,0.5) node[above]{$\mathcal{O}(\varepsilon)$}--(-0.75,0.5);
	\end{tikzpicture}
	\caption{Left: spectrum of $L^\text{SH}$ with double eigenvalues, Right: spectrum of $L^\text{con}$.}
	\label{fig:spectrum}
\end{figure}
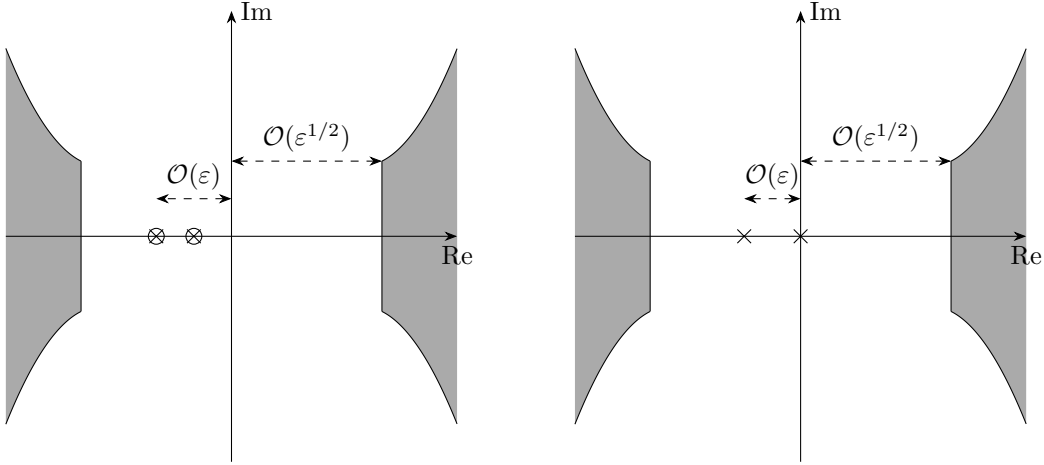

\subsubsection{The function space and spectrum of $L$}

We follow \cite{eckmannWayne91} and define the function space
\begin{align*}
	H^l(\C^m) := \left\{U(p) = \sum_{n \in \Z} U_{n} e^{ink_cp} \in \C^m: \norm[H^l]{U} < \infty\right\},
\end{align*}
where $l > 0$, $k_c$ the critical wave number as in Lemma \ref{lem:periodicSols} and $\norm[H^l]{\cdot}$ is defined by
\begin{align*}
	\norm[H^l]{U}^2 := \sum_{n \in \Z} (1+n^2)^l \snorm{U_n}^2.
\end{align*}
This space is a Hilbert space for $l > 0$ and a Banach algebra for $l > 1/2$.
To simplify the notation, we define $\curlE_{l_1,l_2} := H^{l_1}(\C^4) \times H^{l_2}(\C^2)$ which is equipped with the norm $\norm[\curlE_{l_1,l_2}]{(U,W)} = \norm[H^{l_1}]{U} + \norm[H^{l_2}]{W}$ for $U \in H^{l_1}(\C^4)$ and $W \in H^{l_2}(\C^2)$.
Additionally, we introduce extra notation for the case $l_1=l_2$, i.e.\ we denote $\curlE_l := \curlE_{l,l}$.
Using this notation, we can write \eqref{eq:SHeTransformed}--\eqref{eq:ConTransformed} equivalently as
\begin{align}
	\partial_\xi \begin{pmatrix}
		U \\ W
	\end{pmatrix} = L \begin{pmatrix}
	U \\ W
	\end{pmatrix} + \curlN(U,W),
	\label{eq:fullSystem}
\end{align}
where $(U,W) \in \curlE_l$, and the linear operator $L : \curlE_{l+4,l+2} \rightarrow \curlE_l$ and the nonlinearity $\curlN : \curlE_l \rightarrow \curlE_l$ are defined as
\begin{align*}
    L \begin{pmatrix}
	U \\ W
	\end{pmatrix} = \sum_{n \in \Z} L_n \begin{pmatrix}
	U_n \\ W_n
	\end{pmatrix} e^{ink_cp} \text{ and } \curlN(U,W) = \sum_{n \in \Z} \curlN_n(U,W) e^{ink_c p}.
\end{align*}
Here $L_n$ and $\curlN_n$ are given in \eqref{eq:spatDynSys}.
In particular we note that to calculate the spectrum of $L$ it is sufficient to calculate the spectra of $L_n$ for $n \in \Z$ since $\sigma(L) = \bigcup_{n \in \Z} \sigma(L_n)$.
Furthermore, since $\curlE_l$ is a Banach algebra for $l > 1/2$ and the nonlinearity $\curlN$ has a polynomial structure we have the following result.

\begin{cor}\label{cor:Lipschitz}
	Let $r > 0$ and $l > 1/2$ be arbitrary and $\gamma \in \R$ the coupling parameter in \eqref{eq:SHe}--\eqref{eq:ConE}.
	Then $\curlN : \curlE_l \rightarrow \curlE_l$ is Lipschitz continuous on $B_r(0)$ with 
	\begin{align*}
		\norm[\curlE_l]{\curlN(U_1,W_1) - \curlN(U_2,W_2)} \leq C (1+\snorm{\gamma}) r \norm[\curlE_l]{(U_1,W_1) - (U_2,W_2)}
	\end{align*}
	for some constant $C < \infty$ independent of $\gamma$ and $r$.
\end{cor}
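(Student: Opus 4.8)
The plan is to exploit that $\curlN$ is, componentwise, a polynomial map of degree at most three in the entries of $(U,W)$, each entry lying in the scalar space $H^l(\C)$, which is a Banach algebra for $l>1/2$; the claimed estimate then follows from the algebra property together with the elementary ``difference of products'' identity.

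First I would rewrite $\curlN$ as a map on $\curlE_l$. Write $U^{(0)},\dots,U^{(3)}\in H^l(\C)$ for the four scalar components of $U\in H^l(\C^4)$ and $W^{(0)},W^{(1)}\in H^l(\C)$ for the components of $W\in H^l(\C^2)$. The convolutions $\sum_{p+q=n}(\cdot)_p(\cdot)_q$ and $\sum_{p+q+r=n}(\cdot)_p(\cdot)_q(\cdot)_r$ in the definition of $\curlN_n$ are exactly the Fourier coefficients of the corresponding pointwise products, so that, in the identification $\curlE_l\cong H^l(\C^4)\times H^l(\C^2)$,
\begin{align*}
	\curlN(U,W)=\Bigl(\bigl(0,\,0,\,0,\ U^{(0)}W^{(0)}-(1+\gamma)(U^{(0)})^3\bigr),\ \bigl(0,\ 2c\gamma\, U^{(0)}U^{(1)}\bigr)\Bigr).
\end{align*}
Thus $\curlN=\curlN^{(2)}+\curlN^{(3)}$ splits into a quadratic part $\curlN^{(2)}$ (the $U^{(0)}W^{(0)}$ and $2c\gamma\,U^{(0)}U^{(1)}$ terms) and a cubic part $\curlN^{(3)}$ (the $-(1+\gamma)(U^{(0)})^3$ term). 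Note that after the change of variables $\curlW=\curlV+\gamma\curlU^2$ no derivatives act on the nonlinearity, so $\curlN$ indeed maps $\curlE_l$ into itself.

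Next I would estimate each piece using the Banach algebra bound $\norm[H^l]{fg}\le C\norm[H^l]{f}\,\norm[H^l]{g}$ and polarization. For two factors, $ab-a'b'=(a-a')b+a'(b-b')$ gives, for $(U_i,W_i)\in B_r(0)\subset\curlE_l$,
\begin{align*}
	\norm[\curlE_l]{\curlN^{(2)}(U_1,W_1)-\curlN^{(2)}(U_2,W_2)}\le C(1+\snorm{\gamma})\bigl(\norm[\curlE_l]{(U_1,W_1)}+\norm[\curlE_l]{(U_2,W_2)}\bigr)\norm[\curlE_l]{(U_1,W_1)-(U_2,W_2)},
\end{align*}
where I use that each component norm is dominated by the full $\curlE_l$-norm and that the coefficient $2c\gamma$ is bounded by $C\snorm{\gamma}$ since $c=\varepsilon c_0$ is bounded. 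The telescoping identity $abc-a'b'c'=(a-a')bc+a'(b-b')c+a'b'(c-c')$ handles $\curlN^{(3)}$ in the same way and produces the prefactor $\norm[\curlE_l]{(U_1,W_1)}^2+\norm[\curlE_l]{(U_1,W_1)}\,\norm[\curlE_l]{(U_2,W_2)}+\norm[\curlE_l]{(U_2,W_2)}^2$ with coefficient $C(1+\snorm{\gamma})$. On $B_r(0)$ these prefactors are at most $2r$ and $3r^2$, so the Lipschitz constant of $\curlN$ on $B_r(0)$ is bounded by $C(1+\snorm{\gamma})(r+r^2)$, which for the (small) values of $r$ relevant in this paper is $\le C(1+\snorm{\gamma})r$ with $C$ independent of $\gamma$ and $r$, as claimed.

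I do not expect a genuine obstacle here: the argument is essentially bookkeeping on top of the Banach algebra property. The one point that needs a little care is keeping the dependence on $\gamma$ — and on the front velocity $c$ sitting inside the $W$-component of $\curlN$ — outside the geometric constants, which becomes transparent once the quadratic/cubic splitting above is made explicit; a secondary, harmless subtlety is that the cubic term only contributes a Lipschitz constant of order $r^2$, so the literal bound $C(1+\snorm{\gamma})r$ presupposes that $r$ stays bounded, which is always the case in the applications below.
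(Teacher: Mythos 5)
Your argument is exactly the one the paper has in mind: the corollary is stated as an immediate consequence of the Banach algebra property of $H^l$ for $l>1/2$ and the polynomial (quadratic plus cubic) structure of $\curlN$, which is precisely what you spell out via the telescoping product identities, keeping the $\gamma$- and $c$-dependence explicit. Your closing remark about the cubic term contributing a Lipschitz constant of order $r^2$ (so that the stated bound implicitly assumes $r$ bounded) is a fair and harmless observation that does not affect the use of the corollary in the sequel.
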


\subsection{Center manifold theorem}
Due to the spectral situation, cf. Figure \ref{fig:spectrum}, the size of the center manifold depends on $\varepsilon$.
Therefore, in order to estimate its size, we have to recall the proof of the center manifold theorem.
As mentioned in the introduction, our proof is inspired by the one in \cite{haragusSchneider99}.
First, we introduce the projections on the $\curlO(\varepsilon)$-center part of the spectrum using the Dunford integral
\begin{align*}
	P_{n,c} = \dfrac{1}{2\pi i} \int_{\Gamma_{n,c}} (\lambda I - L_n)^{-1} \d \lambda,
\end{align*}
where $\Gamma_{n,c}$ is a smooth, simple curve surrounding the central spectrum of $L_n$ which is well defined since $L_n \in \C^{6\times 6}$.
Especially, we have $P_{n,c} = 0$ for $n \notin \{0,\pm 1\}$.
Furthermore, we define the projection on the $\curlO(\varepsilon^{1/2})$-hyperbolic part of the spectrum $P_{n,h} = I - P_{n,c}$.
Using that the central and the hyperbolic spectra are $\curlO(1)$-separated -- they have a different imaginary part of distance $\curlO(1)$ -- these operators can be bounded independently of $\varepsilon$ and $n$.
We define for $(U,W) \in \curlE_l$
\begin{align}
	\curlP_c \begin{pmatrix}
		U \\ W
	\end{pmatrix} = \sum_{n \in \Z} P_{n,c} \begin{pmatrix}
		U_n \\ W_n
	\end{pmatrix} e^{ink_c p},
	\label{eq:centralProjection}
\end{align}
which satisfies the following corollary.

\begin{cor}
	Let $l > 1/2$.
	There exists an $\varepsilon_0 > 0$ and $C < \infty$ independent of $\varepsilon_0$ such that
	\begin{align*}
		\norm[\curlL(\curlE_l)]{\curlP_c} < C
	\end{align*}
	for all $\varepsilon \in (0,\varepsilon_0)$, where $\curlL(\curlE_l)$ denotes the space of linear, bounded maps from $\curlE_l$ into itself.
\end{cor}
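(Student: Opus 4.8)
The plan is to exploit that $\curlP_c$ acts diagonally on the Fourier modes and, by construction, is supported only on the three modes $n\in\{0,\pm1\}$ (recall $P_{n,c}=0$ for $\snorm{n}\geq2$); it therefore suffices to bound the three finite-dimensional spectral projections $P_{n,c}$ uniformly in $\varepsilon$ and then reassemble. For the first step I would fix the contours $\Gamma_{n,c}$ \emph{independently} of $\varepsilon$. By the eigenvalue expansions \eqref{eq:eigenvalueSH1}--\eqref{eq:eigenvalueCon}, for $n\in\{0,\pm1\}$ the central eigenvalues of $L_n$ lie within distance $\curlO(\varepsilon)$ of the origin, while every hyperbolic eigenvalue lies within distance $\curlO(\varepsilon^{1/2})$ of a point of modulus at least $1$. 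Hence one may take each $\Gamma_{n,c}$ to be the circle of radius $1/2$ centred at $0$: for $\varepsilon_0$ small enough and all $\varepsilon\in(0,\varepsilon_0)$ it encloses exactly the central part of $\sigma(L_n)$, has length $\pi$, and stays at distance at least $1/4$ from $\sigma(L_n)$.

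Next I would bound the resolvent along these contours uniformly in $\varepsilon$. Each $L_n=L_n(\varepsilon)$ depends analytically on $\varepsilon$, and by the previous step $\Gamma_{n,c}$ avoids $\sigma(L_n)$ for every $\varepsilon\in[0,\varepsilon_0]$, so $(\lambda,\varepsilon)\mapsto(\lambda I-L_n(\varepsilon))^{-1}$ is continuous on the compact set $\Gamma_{n,c}\times[0,\varepsilon_0]$ and hence bounded there, say by some $M_n<\infty$. In particular the Jordan blocks present at $\varepsilon=0$ are harmless, since $\Gamma_{n,c}$ stays $\curlO(1)$ away from the spectrum, which is the only place where the resolvent could blow up. From the Dunford integral $P_{n,c}=\frac{1}{2\pi i}\int_{\Gamma_{n,c}}(\lambda I-L_n)^{-1}\d\lambda$ it then follows that $\norm[\C^{6\times6}]{P_{n,c}}\leq M_n/2$ for all $\varepsilon\in(0,\varepsilon_0)$.

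Finally I would reassemble. For $(U,W)\in\curlE_l$, using $P_{n,c}=0$ for $\snorm{n}\geq2$,
\[ \curlP_c(U,W)=\sum_{n\in\{0,\pm1\}}P_{n,c}(U_n,W_n)^T\,e^{ink_cp}. \]
Each summand is supported on the single Fourier mode $n$, so its $\curlE_l$-norm is bounded by a constant depending only on $l$ (coming from $(1+n^2)^{l/2}\leq2^{l/2}$ for $\snorm{n}\leq1$) times the Euclidean norm of $P_{n,c}(U_n,W_n)^T$, which in turn is at most $\norm[\C^{6\times6}]{P_{n,c}}\snorm{(U_n,W_n)^T}\leq(M_n/2)\norm[\curlE_l]{(U,W)}$, since $\snorm{(U_n,W_n)^T}\leq\norm[\curlE_l]{(U,W)}$. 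Summing the three contributions gives $\norm[\curlL(\curlE_l)]{\curlP_c}\leq C$ with $C$ depending only on $l$ and $\max_{n\in\{0,\pm1\}}M_n$, hence independent of $\varepsilon\in(0,\varepsilon_0)$.

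The argument is essentially routine; the one point requiring care is the uniformity in $\varepsilon$ of the resolvent bound, which I would obtain purely from the compactness of $\Gamma_{n,c}\times[0,\varepsilon_0]$ together with the $\curlO(1)$-separation of $\Gamma_{n,c}$ from $\sigma(L_n)$. This is precisely why the contours should be chosen independent of $\varepsilon$ rather than shrunk with it, and it is also where the $\curlO(1)$-separation of the central and hyperbolic spectra enters.
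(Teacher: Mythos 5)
Your proposal is correct and takes essentially the same route as the paper: the contours $\Gamma_{n,c}$ are fixed independently of $\varepsilon$ using the $\curlO(1)$ separation between the central and hyperbolic parts of $\sigma(L_n)$, the resolvent is bounded uniformly along them, and the bound is assembled over the finitely many modes $n\in\{0,\pm1\}$ where $P_{n,c}\neq 0$. The paper only asserts this via the $\curlO(1)$-separation remark, while you spell out the (routine) details such as the explicit contour choice and the compactness argument for the uniform resolvent bound.
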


With this, we introduce the central part of the operator $L_c := \curlP_c L$ and $L_{n,c} := P_{n,c} L_n$ and we define $L_h$ and $L_{n,h}$ analogously.
Furthermore, we introduce $(U_c,W_c) = \curlP_c (U,W)$ and $(U_h,W_h) = \curlP_h (U,W)$.
We recall that the periodic solutions are of different order in terms of $\varepsilon$, namely $u_\text{per} = \curlO(\varepsilon)$ and $w_\text{per} = v_\text{per} + \gamma u_\text{per}^2 = \curlO(\varepsilon^2)$.
Keeping this in mind, we introduce the rescaling
\begin{align}
	U_c = \varepsilon^{\gamma_u} \underline{U}_c, \ U_h = \varepsilon^{\beta_u} \underline{U}_h, \ W_c = \varepsilon^{\gamma_w} \underline{W}_c, \ W_h = \varepsilon^{\beta_w} \underline{W}_h,
	\label{eq:rescaling}
\end{align}
with $\gamma_u < 1$, $\beta_u, \gamma_w < 2$ and $\beta_w < 3$.
Hence, by projecting \eqref{eq:fullSystem} onto the center and hyperbolic eigenspaces and inserting the rescaling \eqref{eq:rescaling} we obtain
\begin{subequations}
	\begin{align}
		\partial_\xi \begin{pmatrix}
		\uU_c \\ \uW_c
		\end{pmatrix} &= L_c \begin{pmatrix}
		\uU_c \\ \uW_c
		\end{pmatrix} + \ucurlN^\text{quad}_c(\uU_c, \uU_h, \uW_c, \uW_h) +  \ucurlN^\text{cub}_c(\uU_c, \uU_h, \uW_c, \uW_h), \label{eq:SHrescaled}\\
		\partial_\xi \begin{pmatrix}
		\uU_h \\ \uW_h
		\end{pmatrix} &= L_h \begin{pmatrix}
		\uU_h \\ \uW_h
		\end{pmatrix} + \ucurlN^\text{quad}_h(\uU_c, \uU_h, \uW_c, \uW_h) +  \ucurlN^\text{cub}_h(\uU_c, \uU_h, \uW_c, \uW_h),
		\label{eq:Conrescaled}
	\end{align}
\end{subequations}
with the rescaled nonlinearities given by
\begin{align*}
	\ucurlN^\text{quad}_c &= \curlP_c \begin{pmatrix}
		0 \\
		0 \\
		0 \\
		\varepsilon^{-\gamma_u}(e^{\gamma_u+\gamma_w} (\uU_c)_0 (\uW_c)_0 + \varepsilon^{\gamma_u + \beta_w} (\uU_c)_0 (\uW_h)_0 + \varepsilon^{\beta_u+\gamma_w} (\uU_h)_0 (\uW_c)_0 + \varepsilon^{\beta_u+\beta_w} (\uU_h)_0 (\uW_h)_0) \\
		0 \\
		\varepsilon^{1-\gamma_w} 2\gamma c_0 (e^{2\gamma_u} (\uU_c)_0 (\uU_c)_1 + \varepsilon^{\gamma_u + \beta_u} ((\uU_c)_0 (\uU_h)_1 + (\uU_h)_0 (\uU_c)_1) + \varepsilon^{2\beta_u} (\uU_h)_0 (\uU_h)_1)
	\end{pmatrix}, \\
	\ucurlN^\text{quad}_h &= \curlP_h \begin{pmatrix}
	0 \\
	0 \\
	0 \\
	\varepsilon^{-\beta_u}(e^{\gamma_u+\gamma_w} (\uU_c)_0 (\uW_c)_0 + \varepsilon^{\gamma_u + \beta_w} (\uU_c)_0 (\uW_h)_0 + \varepsilon^{\beta_u+\gamma_w} (\uU_h)_0 (\uW_c)_0 + \varepsilon^{\beta_u+\beta_w} (\uU_h)_0 (\uW_h)_0) \\
	0 \\
	\varepsilon^{1-\beta_w} 2\gamma c_0 (e^{2\gamma_u} (\uU_c)_0 (\uU_c)_1 + \varepsilon^{\gamma_u + \beta_u} ((\uU_c)_0 (\uU_h)_1 + (\uU_h)_0 (\uU_c)_1) + \varepsilon^{2\beta_u} (\uU_h)_0 (\uU_h)_1)
	\end{pmatrix}, \\
	\ucurlN^\text{cub}_c &= \curlP_c(0,0,0,\varepsilon^{-\gamma_u}(\varepsilon^{\gamma_u} (\uU_c)_0 + \varepsilon^{\beta_u} (\uU_h)_0)^3,0,0)^T, \\
	\ucurlN^\text{cub}_h &= \curlP_h(0,0,0,\varepsilon^{-\beta_u}(\varepsilon^{\gamma_u} (\uU_c)_0 + \varepsilon^{\beta_u} (\uU_h)_0)^3,0,0)^T. 
\end{align*}
Here, we used that $c = \varepsilon c_0$.

\begin{rem}\label{rem:failiureNormalFormTransform}
	Note that in contrast to \cite{haragusSchneider99}, the quadratic interaction of the central modes does not vanish in $\curlN_c^\text{quad}$.
	Furthermore, the novel term cannot be eliminated via normal form transform with an $\curlO(1)$-bound for $\varepsilon \rightarrow 0$.
	To see this we recall the nonresonance condition
	\begin{align*}
	\snorm{\lambda_1 + \lambda_2 - \lambda_3} > C
	\end{align*}
	uniformly for all small $\varepsilon > 0$ for some $C$ independent of $\varepsilon$, see \cite{haragusSchneider99} and the references therein.
	Here $\lambda_3$ corresponds to the eigenvalue of the equation in which the quadratic term should be eliminated and $\lambda_1$, $\lambda_2$ correspond to the eigenvalues belonging to the variables in the quadratic term.
	However, since we want to eliminate quadratic terms of the form $(\underline{U}_c)_0 (\underline{U}_c)_1$ in the equation for $\underline{W}_c$ and $(\underline{U}_c)_0(\underline{W}_c)_0$ in the equation for $\underline{U}_c$ we find that $\lambda_i \rightarrow 0$ for $\varepsilon \rightarrow 0$ and $i = 1,2,3$ since all central eigenvalues vanish for $\varepsilon = 0$.
	In particular, their imaginary part vanishes.
	Thus the nonresonance condition is violated and we cannot eliminate these terms with a bounded normal form transform.
\end{rem}

From Corollary \ref{cor:Lipschitz}, we then obtain the following result.

\begin{lem}\label{lem:LipschitzNonlin}
	Let $r > 0$, $l > 1/2$, $\varepsilon > 0$ and and $\gamma \in \R$ the coupling parameter in \eqref{eq:SHe}--\eqref{eq:ConE}.
	Then, $\ucurlN_c := \ucurlN^\text{quad}_c + \ucurlN^\text{cub}_c : \curlE_l \times \curlE_l \rightarrow \curlE_l$ and $\ucurlN_h := \ucurlN^\text{quad}_h + \ucurlN^\text{cub}_h : \curlE_l \times \curlE_l \rightarrow \curlE_l$ are Lipschitz continuous on $B_r(0) \subset \curlE_l$ with
	\begin{align*}
			\norm[\curlE_l \times \curlE_l]{\ucurlN_c(\uX_1) - \ucurlN_c(\uX_2)} &\leq C (1+\snorm{\gamma}) r \varepsilon^{\min(\gamma_w, \beta_w, \beta_u + \gamma_w - \gamma_u, 1+ 2\gamma_u-\gamma_w)} \norm[\curlE_l \times \curlE_l]{\uX_1 - \uX_2}, \\
			\norm[\curlE_l \times \curlE_l]{\ucurlN_h(\uX_1) - \ucurlN_h(\uX_2)} &\leq C (1+\snorm{\gamma}) r \varepsilon^{\min(\gamma_u+\gamma_w - \beta_u, \gamma_w,\beta_w,1+2\gamma_u-\beta_w)} \norm[\curlE_l \times \curlE_l]{\uX_1 - \uX_2},
	\end{align*}
	where $\uX = (\uU_c, \uU_h, \uW_c,\uW_h)$ and $C < \infty$ independent of $\varepsilon$ and $\gamma$.
\end{lem}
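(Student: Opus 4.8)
The plan is to derive the two estimates from the multilinear (quadratic plus cubic) structure of the nonlinearity $\curlN$ together with the Banach-algebra property of $\curlE_l=H^l(\C^4)\times H^l(\C^2)$ for $l>1/2$ — exactly as in the proof of Corollary \ref{cor:Lipschitz} — and then to bookkeep the powers of $\varepsilon$ produced by the rescaling \eqref{eq:rescaling}. First I would record the elementary fact that a bounded $k$-linear map $B$ on a Banach algebra is Lipschitz on every ball $B_r(0)$, with constant $\le k\norm{B}r^{k-1}$, via the telescoping identity
\[
 B(X^1,\dots,X^k)-B(Y^1,\dots,Y^k)=\sum_{j=1}^{k}B\bigl(X^1,\dots,X^{j-1},\,X^j-Y^j,\,Y^{j+1},\dots,Y^k\bigr).
\]
Every monomial in $\ucurlN^\text{quad}_c$ and $\ucurlN^\text{quad}_h$ is a product of two of the coordinate functions $(\uU_c)_0,(\uU_h)_0,(\uU_c)_1,(\uU_h)_1,(\uW_c)_0,(\uW_h)_0$, and every monomial in $\ucurlN^\text{cub}_c$ and $\ucurlN^\text{cub}_h$ is a cube of such a function; by the Banach-algebra property each of these is a bounded bilinear resp.\ trilinear map $\curlE_l\times\cdots\to H^l$ with norm bounded by an absolute constant, uniformly in $\varepsilon$. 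Note that $(\uU_c)_0$ and $(\uU_c)_1$ are two components of the single $\C^4$-valued function $\uU_c\in H^l(\C^4)$, so each is controlled by $\norm[H^l]{\uU_c}$ and no $p$-derivatives are lost; the factor $1+\snorm{\gamma}$ comes, as in Corollary \ref{cor:Lipschitz}, from the coefficients $1+\gamma$ and $2\gamma c_0$ of $\curlN$.

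Since the projections $\curlP_c$ and $\curlP_h$ are bounded on $\curlE_l$ uniformly in $\varepsilon$, applying them costs nothing in the power count. Hence, for $\uX_1,\uX_2\in B_r(0)$, each monomial of the rescaled nonlinearities is Lipschitz with constant $C(1+\snorm{\gamma})r$ times the explicit power of $\varepsilon$ multiplying it in the displayed formulas for $\ucurlN^\text{quad}_\bullet$ and $\ucurlN^\text{cub}_\bullet$, and the bound for $\ucurlN_c$ resp.\ $\ucurlN_h$ is governed by the smallest such power. For $\ucurlN_c$ the quadratic monomials of the $\uU_c$-component carry $\varepsilon^{\gamma_w},\varepsilon^{\beta_w},\varepsilon^{\beta_u+\gamma_w-\gamma_u},\varepsilon^{\beta_u+\beta_w-\gamma_u}$, those of the $\uW_c$-component carry $\varepsilon^{1+2\gamma_u-\gamma_w},\varepsilon^{1+\gamma_u+\beta_u-\gamma_w},\varepsilon^{1+2\beta_u-\gamma_w}$, and the cubic monomials carry $\varepsilon^{2\gamma_u}$ and larger powers; discarding the dominated exponents by means of the sign conditions $\gamma_u<1$, $\beta_u,\gamma_w<2$, $\beta_w<3$ together with the orderings $\gamma_u<\beta_u$, $\gamma_w\ge1$ that hold for the eventual choice of exponents, one is left with exactly $\varepsilon^{\min(\gamma_w,\beta_w,\beta_u+\gamma_w-\gamma_u,1+2\gamma_u-\gamma_w)}$. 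The computation for $\ucurlN_h$ is identical except that the $\uU_h$-component is divided by $\varepsilon^{\beta_u}$ and the $\uW_h$-component by $\varepsilon^{\beta_w}$, which shifts the $u$- and $w$-powers accordingly and produces $\varepsilon^{\min(\gamma_u+\gamma_w-\beta_u,\gamma_w,\beta_w,1+2\gamma_u-\beta_w)}$.

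I do not expect a serious obstacle here: once the Banach-algebra Lipschitz estimate and the $\varepsilon$-uniform boundedness of $\curlP_c,\curlP_h$ are available, the statement is essentially bookkeeping. The one point needing a little care is to verify that the cubic contributions do not lower the minimum — i.e.\ that $\varepsilon^{2\gamma_u}$ (resp.\ $\varepsilon^{3\gamma_u-\beta_u}$ in the hyperbolic estimate) is dominated by one of the four retained quadratic powers — which holds for the exponents chosen later; in particular $\gamma_w\ge1$ gives $1+2\gamma_u-\gamma_w\le2\gamma_u$ (cf.\ Remark \ref{rem:whydoesthiswork}). The genuine difficulty of the construction lies not in this lemma but in the subsequent task of choosing $\gamma_u,\beta_u,\gamma_w,\beta_w$ so that all four exponents in each of the two minima are simultaneously strictly positive — which is precisely what the differentiated $\varepsilon$-rescaling of $u$ and $v$ in \eqref{eq:rescaling} and the transformation $\curlW=\curlV+\gamma\curlU^2$ (supplying the extra factor $c=\varepsilon c_0$ in \eqref{eq:ConTransformed}) are designed to make possible.
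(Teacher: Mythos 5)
Your proposal is correct and follows exactly the route the paper intends: the paper gives no separate argument for this lemma beyond "From Corollary \ref{cor:Lipschitz} we obtain...", i.e.\ the Banach-algebra Lipschitz estimate for the polynomial nonlinearity combined with the $\varepsilon$-uniform boundedness of $\curlP_c,\curlP_h$ and bookkeeping of the $\varepsilon$-powers in the rescaled monomials. Your extra check that the dominated exponents (including the cubic ones, via $\gamma_w\ge 1$ and $\beta_u\ge\gamma_u$, $\beta_w\ge\gamma_w$ for the choice \eqref{eq:rescalingCoeffs}) do not lower the stated minima is precisely the bookkeeping the paper leaves implicit.
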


It turns out that this is enough to prove that the system \eqref{eq:SHrescaled}--\eqref{eq:Conrescaled} has a center manifold of size $\curlO(1)$ for a certain choice of $\gamma_u, \gamma_w, \beta_u, \beta_w$.
This then yields the desired result.
To show this, we define the semigroup corresponding to the stable, unstable and central part of $L_n$ by
\begin{align*}
	S_{n,c}(t) &= \dfrac{1}{2\pi i} \int_{\Gamma_{n,c}} (\lambda I - L_n)^{-1} e^{\lambda t} \d \lambda, \\
	S_{n,s}(t) &= \dfrac{1}{2\pi i} \int_{\Gamma_{n,s}} (\lambda I - L_n)^{-1} e^{\lambda t} \d \lambda, \\
	S_{n,u}(t) &= \dfrac{1}{2\pi i} \int_{\Gamma_{n,u}} (\lambda I - L_n)^{-1} e^{\lambda t} \d \lambda,
\end{align*}
where $\Gamma_{n,c}, \Gamma_{n,s}$ and $\Gamma_{n,u}$ are smooth, simple curves around the central, stable and unstable part of the spectrum of $L_n$, respectively.
And similarly to $\curlP_c$ we define
\begin{align*}
	S_{j}(t) X = \sum_{n \in \Z} S_{n,j}(t) X_n e^{ink_c p}
\end{align*}
for $X \in \curlE_l$ and $j \in \{c,s,u\}$.
We now provide estimates for these operators.

\begin{lem}\label{lem:lipschitzConti}
	There exist constants $C_1, C_2 < \infty$ and $\varepsilon_0 > 0$ such that for all $\varepsilon \in (0, \varepsilon_0)$ the following estimates hold
	\begin{align*}
		\norm[\curlL(\curlE_l)]{S_{c}(t)} &\leq C_1 (1 + \snorm{t}) e^{C_2 \varepsilon \snorm{t}}, \\
		\norm[\curlL(\curlE_l)]{S_{u}(t)} &\leq C_1 \dfrac{1}{\sqrt{\varepsilon}} e^{-C_2 \sqrt{\varepsilon} \snorm{t}} \text{ for } t < 0, \\
		\norm[\curlL(\curlE_l)]{S_{s}(t)} &\leq C_1 \dfrac{1}{\sqrt{\varepsilon}} e^{-C_2 \sqrt{\varepsilon} \snorm{t}} \text{ for } t > 0.
	\end{align*}
\end{lem}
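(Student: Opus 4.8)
The plan is to estimate each semigroup $S_{n,j}(t)$ at the level of the $6\times 6$ blocks and then sum over $n$ with the weight $(1+n^2)^l$, exploiting that the weight is the same on the domain and target of each $S_{n,j}(t)$ so it plays no role in the block-by-block bounds. First I would treat the central semigroup $S_c(t)$. For $n\in\{0,\pm1\}$ the central spectrum of $L_n$ consists of the $\mathcal{O}(\varepsilon)$ eigenvalues listed in \eqref{eq:eigenvalueSH1}--\eqref{eq:eigenvalueCon} together with $\nu_{0,+}=0$, $\nu_{0,-}=-\varepsilon c_0$; for $n\notin\{0,\pm1\}$ we have $P_{n,c}=0$ and hence $S_{n,c}(t)=0$. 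On the finitely many nontrivial blocks, $L_{n,c}=P_{n,c}L_n$ is a matrix of norm $\mathcal{O}(\varepsilon)$ with all eigenvalues of real part $\le C_2\varepsilon$; writing $L_{n,c}$ in Jordan form (the worst case being a $2\times2$ Jordan block, which is where the factor $(1+|t|)$ enters) gives $\|S_{n,c}(t)\|\le C_1(1+|t|)e^{C_2\varepsilon|t|}$ uniformly in $n$, and since only three blocks contribute the bound survives the sum over $n$.

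Next I would handle the stable and unstable semigroups. The key point is the $\mathcal{O}(\sqrt\varepsilon)$ spectral gap: by \eqref{eq:eigenvalueSH1}--\eqref{eq:eigenvalueCon}, for every $n$ the hyperbolic eigenvalues of $L_n$ satisfy $|\real\lambda|\ge C_2\sqrt\varepsilon$, so one can choose the contours $\Gamma_{n,s}$, $\Gamma_{n,u}$ to lie at distance $\sim\sqrt\varepsilon$ from the spectrum. The naive Dunford-integral estimate $\|S_{n,s}(t)\|\le \frac{1}{2\pi}\,\mathrm{length}(\Gamma_{n,s})\cdot\sup_{\lambda\in\Gamma_{n,s}}\|(\lambda I-L_n)^{-1}\|\,e^{-C_2\sqrt\varepsilon|t|}$ then produces the prefactor $1/\sqrt\varepsilon$: near an eigenvalue of real part $\sim\sqrt\varepsilon$ the resolvent on a contour at distance $\sqrt\varepsilon$ is $\mathcal{O}(\varepsilon^{-1/2})$ for a simple eigenvalue, and a bounded contour length absorbs the rest. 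The subtlety is that the hyperbolic eigenvalues are only $\mathcal{O}(\sqrt\varepsilon)$-separated \emph{within} a block (the $\pm$ pairs in \eqref{eq:eigenvalueSH1}--\eqref{eq:eigenvalueCon}), so one must check that the resolvent bound $\|(\lambda I-L_n)^{-1}\|\le C/\sqrt\varepsilon$ genuinely holds on a suitably chosen contour even when two eigenvalues are $\sqrt\varepsilon$-close — this is where one uses that, after diagonalising, the relevant $2\times2$ sub-blocks have eigenvalues symmetric about $\mp i(n\pm1)$ with separation exactly of order $\sqrt\varepsilon$, so a circle of radius $\tfrac12\sqrt\varepsilon$ around each eigenvalue works and the perturbation terms $\mathcal{O}(\varepsilon)$ do not spoil the estimate. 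For $|n|$ large the eigenvalues are well-separated and the bound is in fact much better than $\varepsilon^{-1/2}$, so uniformity in $n$ is not an issue there; one just needs the bound to be uniform over the finitely many ``dangerous'' small-$n$ blocks and over the tail, which follows from the explicit asymptotics. Summing $\sum_n(1+n^2)^l$-weighted block bounds is harmless since the per-block bound is $n$-independent and acts diagonally in the Fourier index.

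The main obstacle I expect is precisely the resolvent estimate on the hyperbolic contours when eigenvalues collide at scale $\sqrt\varepsilon$: one has to be careful that the contour can be drawn so that it stays distance $\gtrsim\sqrt\varepsilon$ from \emph{all} eigenvalues of the relevant block while still enclosing only the stable (resp.\ unstable) ones, and that the resulting resolvent norm is $\mathcal{O}(\varepsilon^{-1/2})$ rather than, say, $\mathcal{O}(\varepsilon^{-1})$, which would come from a genuine near-degeneracy. The explicit eigenvalue expansions in Appendix~\ref{app:eigenvalues} are exactly what rules this out: the two members of each $\pm$-pair are separated by a quantity of order $\sqrt\varepsilon$ (not smaller), so the square-root loss is sharp and no worse. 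I would also need to note that all the constants can be taken uniform in the coupling parameter $\gamma$ (indeed $L_n$ does not depend on $\gamma$ at all, so this is immediate) and in $n$, using the uniform resolvent-type bound already recorded for $L_\varepsilon$ in the proof of Lemma~\ref{lem:periodicSols} to control the high-frequency tail. Once these block estimates are in hand, reassembling via $S_j(t)X=\sum_n S_{n,j}(t)X_n e^{ink_cp}$ and the definition of $\|\cdot\|_{H^l}$ gives the three stated inequalities directly.
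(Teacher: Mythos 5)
Your proposal follows essentially the same route as the paper's proof: estimate each $6\times 6$ block semigroup via the Dunford integral, getting $(1+\snorm{t})e^{C_2\varepsilon\snorm{t}}$ on the finitely many central blocks ($n\in\{0,\pm1\}$) from the $\mathcal{O}(\varepsilon)$-close eigenvalue pairs, and $\varepsilon^{-1/2}e^{-C_2\sqrt{\varepsilon}\snorm{t}}$ on the hyperbolic part because each stable/unstable eigenvalue is only $\mathcal{O}(\sqrt{\varepsilon})$ away from its partner (the paper evaluates the integral by residues, $\adj(\nu_{n,-}-L_n^{\text{con}})/(\nu_{n,-}-\nu_{n,+})$, which is your small-circle contour bound in closed form), and then sums the $n$-independent block bounds in the weighted norm. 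The only slip is the claim that $\|L_{n,c}\|=\mathcal{O}(\varepsilon)$ — the nilpotent part is $\mathcal{O}(1)$ — but since you anyway invoke the ($2\times2$ Jordan-type) near-degeneracy to produce the factor $(1+\snorm{t})$, exactly as the paper's computation does, this does not affect the argument.
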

\begin{proof}
	For the estimate of the central semigroup we remark that the central spectrum is $\curlO(1)$-bounded away from the rest of the spectrum for $\varepsilon$ sufficiently small.
	Hence, for any small $\varepsilon_0 > 0$ we can fix the curves $\Gamma_{n,c}$ for all $\varepsilon \in (0,\varepsilon_0)$.
	Now, we estimate the semigroup $S_{n,c}$ for $n \in \{\pm 1,0\}$. 
	This is sufficient since $\sigma(L_n)$ does not contain central eigenvalues for $n \notin \{0,\pm 1\}$.
	Since $L_n$ has a block structure, we can decompose the residual $(\lambda - L_n)^{-1}$ into $(\lambda - L_n^\text{SH})^{-1}$ and $(\lambda - L_n^\text{con})^{-1}$ and thus, we can estimate both parts separately.
	For $n = 0$, the Swift-Hohenberg part has no central eigenvalues and we focus on $L_0^\text{con} \in \C^{2\times 2}$ which has only central eigenvalues.
	We rewrite the residual as $(\lambda - L_0^\text{con})^{-1} = \det(\lambda - L_0^\text{con})^{-1} \adj(\lambda - L_0^\text{con})$, where $\adj(A)$ denotes the adjunct of $A$ and obtain by explicitly calculating the adjunct
	\begin{align*}
		\norm[\C^{2\times 2}]{\dfrac{1}{2\pi i} \int_{\Gamma_{0,c}} \dfrac{1}{\lambda (\lambda + \varepsilon c_0)} \adj(\lambda - L_0^\text{con}) e^{\lambda t} \d\lambda} &\leq \dfrac{1}{2\pi i} \snormlr{\int_{\Gamma_{0,c}} \left[\dfrac{e^{\lambda t}}{\lambda} + \dfrac{e^{\lambda t}}{\lambda + \varepsilon c_0} + \dfrac{e^{\lambda t}}{\lambda(\lambda + \varepsilon c_0)} \right]\d\lambda} \\
		&\leq C_1 (1 + \snorm{t}) e^{C_2 \varepsilon \snorm{t}}.
	\end{align*}
	By similar calculations, we obtain the same estimate for $n = \pm 1$ since $L_n^\text{SH}$ has two central and two hyperbolic eigenvalues.
	Thus, since the estimates are independent of $n \in \Z$, we obtain the upper bound
	\begin{align*}
		\norm[\curlE_l]{S_c(t)X} = \left(\sum_{n \in \Z} (1+n^2)^l \snorm{S_{n,c}(t) X}^2\right)^{1/2} \leq C (1 + \snorm{t}) e^{C_2 \varepsilon \snorm{t}} \norm[\curlE_l]{X}
	\end{align*}
	for any $X \in \curlE_l$ which yields the estimate for $S_c(t)$.
	
	Next, we prove the estimate for the stable and unstable semigroup, respectively.
	We focus on the stable semigroup since the proofs are identical.
	Note that we can decompose the spectrum of $L_n$ into three pairs of eigenvalues which are $\curlO(1)$ apart from each other, see \eqref{eq:eigenvalueSH1}--\eqref{eq:eigenvalueSH2} and \eqref{eq:eigenvalueCon}.
	Thus, for any small $\varepsilon > 0$ we can decompose the curve $\Gamma_{n,s}$ into three separate curves.
	However, these cannot be chosen independent of $\varepsilon$ since the eigenvalues are symmetric with respect to the imaginary axis and their real part vanishes for $\varepsilon \rightarrow 0$.
	We now estimate the contribution of $L_n^\text{con}$ and remark that the contribution of $L_n^\text{SH}$ can be handled analogously since there are only two relevant eigenvalues as mentioned above.
	Hence, for $n \neq 0$ we calculate for $t > 0$
	\begin{align*}
		\norm[\C^{2\times 2}]{\dfrac{1}{2\pi i} \int_{\Gamma_{n,s}} (\lambda I - L^\text{con}_n)^{-1} e^{\lambda t} \d \lambda} &= \norm[\C^{2\times 2}]{\dfrac{1}{2\pi i} \int_{\Gamma_{n,s}} \dfrac{1}{\lambda (\lambda + \epsilon c_0)} \adj(\lambda - L_0^\text{con}) e^{\lambda t} \d\lambda} \\
		&\leq \dfrac{1}{2\pi i} \norm[\C^{2\times 2}]{\int_{\Gamma_{n,s}} \left[\dfrac{e^{\lambda t}}{(\lambda - \nu_{n,+})(\lambda - \nu_{n,-})} \adj(\lambda - L_n^\text{SH})\right] \d\lambda} \\
		&\leq \norm[C^{2\times 2}]{\adj(\nu_{n,-} - L_n^\text{SH})} \dfrac{1}{\snorm{\nu_{n,-} - \nu_{n,+}}} e^{\nu_{n,-} t} \\
		&\leq C_1 \dfrac{1}{\sqrt{\varepsilon}} e^{-C_2 \sqrt{\varepsilon}t},
	\end{align*}
	where we used that $\real(\nu_{n,\pm}) = \curlO(\varepsilon^{1/2})$ and $\real(\nu_{n,-}) < 0$.
	Since this estimate is independent of $n \in \Z$ we obtain the desired estimate for $S_{s}$ in $\curlL(\curlE_l)$.
\end{proof}

We now introduce a smooth cut-off function $\chi_r \in [0,1]$ for some $r > 0$ which we define by $\chi_r(x) = 1$ for $\snorm{x} < r/2$, $\chi_r = 0$ for $\snorm{x} > r$ and $\chi_r \in (0,1)$ for $\snorm{x} \in [r/2,r]$.
Using this cut-off function, we define $\tilde{\ucurlN}_j := \chi_r \ucurlN_j$ for $j \in \{c,h\}$ which is a globally Lipschitz continuous mapping with the estimates given in Lemma \ref{lem:lipschitzConti}.
We now split $\curlP_h = \curlP_s + \curlP_u$ into a projection onto the stable eigenspaces $\curlP_s$ and a projection onto the unstable eigenspaces $\curlP_u$ which we define similarly to $\curlP_c$.
Then, we follow the standard procedure for the construction of center manifold results and show that the mapping 
\begin{align}
X_c(t) &= S_c(t) X_c(0) + \int_0^t S_c(t-\tau) \tilde{\ucurlN}_c(X(\tau)) \d \tau \nonumber\\
X_u(t) &= - \int_t^\infty S_u(t - \tau) \tilde{\ucurlN}_h(X(\tau)) \d\tau \label{eq:fixedPointSystem} \\
X_s(t) &= \int_{-\infty}^{t} S_s(t - \tau) \tilde{\ucurlN}_h(X(\tau)) \d \tau \nonumber
\end{align}
has a fixed point in
\begin{align*}
\curlX_\eta := \left\{X \in C^0(\R, \curlE_l) : \norm[\eta]{X} := \sup_{t \in \R} e^{-\eta \snorm{t}} \norm[\curlE_l]{X} < \infty \right\}
\end{align*}
with $\eta = \frac{C_2}{2} \sqrt{\varepsilon}$ where we use the decomposition $X_{j} = \curlP_j X$ for $j \in \{c,s,u\}$.
Using Lemma \ref{lem:LipschitzNonlin} we now estimate
\begin{subequations}
	\begin{align}
		\norm[\eta]{\tilde{\ucurlN}_c(X_1) - \tilde{\ucurlN}_c(X_2)} &\leq C (1+\snorm{\gamma}) r \varepsilon^{\min(\gamma_w, \beta_w, \beta_u + \gamma_w - \gamma_u, 1+ 2\gamma-\gamma_w)} \norm[\eta]{X_1 - X_2}, \label{eq:LipschitzEstimateC}\\
		\norm[\eta]{\tilde{\ucurlN}_h(X_1) - \tilde{\ucurlN}_h(X_2)} &\leq C (1+\snorm{\gamma}) r \varepsilon^{\min(\gamma_u+\gamma_w - \beta_u, \gamma_w,\beta_w,1+2\gamma_u-\beta_w)} \norm[\eta]{X_1 - X_2}, \label{eq:LipschitzEstimateH}
	\end{align}
\end{subequations}
for any $X_1,X_2 \in \curlX_\eta$.
Furthermore, for any nonlinear mapping $V : \curlE_l \rightarrow \R$ we have
\begin{align*}
	\norm[\eta]{\int_0^t S_c(t-\tau) V(X(\tau)) \d\tau} &\leq \sup_{t \in \R} e^{-\eta\snorm{t}} \int_0^t C_1 (1+\snorm{t}) e^{C_2 \varepsilon \snorm{t-\tau}} e^{\eta \tau} \d\tau \norm[\eta]{V(X)} \\
	&\leq C \varepsilon^{-1} \norm[\eta]{V(X)}, \\
	\norm[\eta]{\int_t^\infty S_u(t-\tau) V(X(\tau)) \d\tau} &\leq \sup_{t \in \R} e^{-\eta \snorm{t}} \int_t^\infty C_1 \dfrac{1}{\sqrt{\varepsilon}} e^{-C_2 \sqrt{\varepsilon}\snorm{t - \tau}} e^{\eta \snorm{\tau}} \d\tau \norm[\eta]{V(X)} \\
	&\leq C \varepsilon^{-1} \norm[\eta]{V(X)}, \\
	\norm[\eta]{\int_{-\infty}^t S_s(t-\tau) V(X(\tau)) \d\tau} &\leq C \varepsilon^{-1} \norm[\eta]{V(X)},
\end{align*}
where we used $\int_0^\infty e^{\sqrt{\varepsilon}t} t \d t = \curlO(\varepsilon^{-1})$ and $\eta < C_2 \sqrt{\varepsilon}$.
Combining these estimates we find that the system \eqref{eq:fixedPointSystem} maps $\curlX_\eta$ into itself and that we can estimate the Lipschitz constant of $\eqref{eq:fixedPointSystem}$ which is of order
\begin{align*}
	C(1+\snorm{\gamma}) r \varepsilon^{\min(\kappa_1,\kappa_2)-1},
\end{align*}
where $\kappa_1,\kappa_2$ are the exponents in \eqref{eq:LipschitzEstimateC} and \eqref{eq:LipschitzEstimateH}, respectively.
Finally, we set 
\begin{align}
	\gamma_u = 2/3 + \delta \text{, } \beta_u = 1 + \delta/2 \text{, } \gamma_w = 4/3 \text{ and } \beta_w = 4/3 + \delta
	\label{eq:rescalingCoeffs}
\end{align} 
for some $\delta > 0$.
This choice yields that $\kappa_1, \kappa_2 > 1$ and thus, \eqref{eq:fixedPointSystem} is a contraction in $\curlX_\eta$ provided that $\varepsilon$ is small enough.
Especially, this does not impose a restriction on the cut-off radius $r$.
Thus following the standard proof of the center manifold theorem, see, e.g.\ \cite{haragusIooss11}, we have a center manifold of size $\curlO(1)$ for the rescaled system \eqref{eq:SHrescaled}--\eqref{eq:Conrescaled}.
Finally, by reverting the rescaling \eqref{eq:rescaling} we arrive at the main result of this section.

\begin{theorem}\label{thm:centerManifold}
	Let $l > 1/2$, $\delta \in \left(0, 1/3\right)$ and $\gamma \in \R$.
	There exists an $\varepsilon_0 > 0$ such that for every $\varepsilon \in (0,\varepsilon_0)$ there exists a neighbourhood $O_c = O_u \times O_w \subset \curlE_c := \curlP_c \curlE_l$ of the origin and a mapping $h = (h_u, h_w) : O_c \rightarrow \curlE_h := \curlP_h\curlE_l$ such that the following holds.
	\begin{itemize}
		\item The neighbourhood $O_u$ is of size $\mathcal{O}(\varepsilon^{2/3 + \delta})$ and $O_w$ is of size $\mathcal{O}(\varepsilon^{4/3})$.
		\item The center manifold
		\begin{align*}
		\curlM_c = \{(U,W) = (U_c,W_c)+ h(U_c,W_c) : (U_c, W_c) \in O_c\}
		\end{align*}
		contains all small bounded solutions of \eqref{eq:fullSystem}.
		\item Every solution of the reduced system
		\begin{align}
			\partial_\xi \begin{pmatrix}
				U_c \\ W_c
			\end{pmatrix} &= L_c \begin{pmatrix}
		U_c \\ W_c
		\end{pmatrix} + \curlP_c \curlN(U_c + h_u(U_c,W_c), W_c + h_w(U_c,W_c))
		\label{eq:CMEquation}
		\end{align}
		gives a solution to the full system~\eqref{eq:fullSystem} via $(U,W) = (U_c,W_c) + h(U_c,W_c)$.
	\end{itemize}
\end{theorem}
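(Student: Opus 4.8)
The strategy is to carry out the standard Lyapunov–Perron construction of the center manifold for the rescaled system \eqref{eq:SHrescaled}–\eqref{eq:Conrescaled}, using the semigroup estimates of Lemma \ref{lem:lipschitzConti} and the Lipschitz bounds of Lemma \ref{lem:LipschitzNonlin}, and then to undo the rescaling \eqref{eq:rescaling}. Concretely, I would fix $l>1/2$, $\delta\in(0,1/3)$, and choose the exponents $\gamma_u,\beta_u,\gamma_w,\beta_w$ as in \eqref{eq:rescalingCoeffs}. With the cut-off $\chi_r$ already introduced, I would regard the integral system \eqref{eq:fixedPointSystem} as a fixed-point equation for $X=(X_c,X_s,X_u)\in\curlX_\eta$ with $\eta=\tfrac{C_2}{2}\sqrt\varepsilon$, parametrized by the central initial datum $X_c(0)\in O_c$. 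The estimates already assembled in the excerpt show that \eqref{eq:fixedPointSystem} maps $\curlX_\eta$ into itself with Lipschitz constant of order $C(1+|\gamma|)\,r\,\varepsilon^{\min(\kappa_1,\kappa_2)-1}$; since the choice \eqref{eq:rescalingCoeffs} forces $\kappa_1,\kappa_2>1$, this constant is $<1$ for $\varepsilon$ small, uniformly in the cut-off radius $r$. Hence the Banach fixed-point theorem yields a unique $X=X(\cdot;X_c(0))\in\curlX_\eta$, and smoothness in $X_c(0)$ follows from the uniform contraction principle together with the smoothness of $\chi_r\ucurlN_{c},\chi_r\ucurlN_h$.

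The center manifold of the rescaled, cut-off system is then $\{X_c(0)+h_\sharp(X_c(0)):X_c(0)\in\curlE_c\}$ where $h_\sharp(X_c(0)):=X_s(0;X_c(0))+X_u(0;X_c(0))$. Standard arguments (see \cite{haragusIooss11}) give the three asserted properties in the rescaled variables: (i) $h_\sharp(0)=0$ and $Dh_\sharp(0)=0$, so $h_\sharp$ is tangent to the central subspace at the origin; (ii) every globally bounded (hence, by the cut-off, every sufficiently small bounded) solution of the rescaled equation lies on the manifold, which follows from the uniqueness of the fixed point in $\curlX_\eta$; (iii) a curve $X_c(\xi)$ solving the reduced equation $\partial_\xi X_c = L_c X_c + \curlP_c\ucurlN(X_c+h_\sharp(X_c))$ produces, via $X=X_c+h_\sharp(X_c)$, an honest solution of the full rescaled system — this is the usual reduction-principle computation using invariance of the graph under the flow. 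Reverting \eqref{eq:rescaling}, i.e. multiplying the central and hyperbolic components back by $\varepsilon^{\gamma_u},\varepsilon^{\beta_u},\varepsilon^{\gamma_w},\varepsilon^{\beta_w}$, turns the size-$\curlO(1)$ neighbourhood of the rescaled problem into $O_u$ of size $\curlO(\varepsilon^{2/3+\delta})$ and $O_w$ of size $\curlO(\varepsilon^{4/3})$, and transports $h_\sharp$ to the desired map $h=(h_u,h_w)$; equation \eqref{eq:CMEquation} is exactly the reduced equation written in the original variables, and the reduction property (iii) passes through the linear rescaling unchanged.

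The one genuinely non-routine point is the uniformity in $\varepsilon$ of all constants, and this is precisely where the specific exponents \eqref{eq:rescalingCoeffs} and the transformation $\curlW=\curlV+\gamma\curlU^2$ (with its extra factor $c=\varepsilon c_0$) earn their keep. The $\varepsilon$-dependent spectral gap makes the hyperbolic semigroups $S_{s},S_u$ grow like $\varepsilon^{-1/2}$ and the central semigroup grow linearly in $|t|$ with rate $\curlO(\varepsilon)$, so the convolution operators in \eqref{eq:fixedPointSystem} pick up a factor $\curlO(\varepsilon^{-1})$ (this is the computation already displayed, using $\int_0^\infty e^{\sqrt\varepsilon t}t\,dt=\curlO(\varepsilon^{-1})$ and $\eta<C_2\sqrt\varepsilon$). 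The Lipschitz constant of the cut-off nonlinearity carries a compensating factor $\varepsilon^{\min(\kappa_1,\kappa_2)}$ coming from Lemma \ref{lem:LipschitzNonlin}; the resonant quadratic terms $(\uU_c)_0(\uU_c)_1$ and $(\uU_c)_0(\uW_c)_0$ — which, by Remark \ref{rem:failiureNormalFormTransform}, cannot be removed by a bounded normal form transform — contribute the exponents $1+2\gamma_u-\gamma_w$ and $\gamma_w$ (respectively $1+2\gamma_u-\beta_w$ and $\beta_u+\gamma_w-\gamma_u$ in the hyperbolic equation), and one checks that \eqref{eq:rescalingCoeffs} makes all of $\gamma_w,\beta_w,\beta_u+\gamma_w-\gamma_u,1+2\gamma_u-\gamma_w,\gamma_u+\gamma_w-\beta_u,1+2\gamma_u-\beta_w$ strictly larger than $1$. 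Thus $\min(\kappa_1,\kappa_2)-1>0$ and the net Lipschitz constant $C(1+|\gamma|)r\,\varepsilon^{\min(\kappa_1,\kappa_2)-1}\to0$ as $\varepsilon\to0$, giving a contraction independent of $r$ and hence a center manifold whose original-variable size is dictated solely by the rescaling, namely $\curlO(\varepsilon^{2/3+\delta})$ in $U$ and $\curlO(\varepsilon^{4/3})$ in $W$ — comfortably large enough to contain the periodic equilibria of Lemma \ref{lem:periodicSols}, which are $\curlO(\varepsilon)$ and $\curlO(\varepsilon^2)$ respectively. All remaining steps (regularity of $h$, tangency, the reduction identity) are the standard ones and I would only sketch them, referring to \cite{haragusIooss11, haragusSchneider99}.
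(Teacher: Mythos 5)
Your proposal follows essentially the same route as the paper: the Lyapunov--Perron fixed-point argument for \eqref{eq:fixedPointSystem} in $\curlX_\eta$ with $\eta=\tfrac{C_2}{2}\sqrt{\varepsilon}$, combining the semigroup bounds of Lemma \ref{lem:lipschitzConti} with the Lipschitz estimates of Lemma \ref{lem:LipschitzNonlin}, verifying that the exponents \eqref{eq:rescalingCoeffs} make $\kappa_1,\kappa_2>1$ so the contraction constant $C(1+\snorm{\gamma})r\varepsilon^{\min(\kappa_1,\kappa_2)-1}$ vanishes as $\varepsilon\to 0$ without restricting the cut-off radius, and then reverting the rescaling \eqref{eq:rescaling} to obtain the stated sizes of $O_u$ and $O_w$. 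This matches the paper's construction, which likewise delegates the remaining standard steps (smoothness of $h$, invariance, reduction) to \cite{haragusIooss11}.
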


\begin{rem}
		We point out that due to the ansatz \eqref{eq:rescaling}, the center manifold in Theorem \ref{thm:centerManifold} has different sizes in $u$ and $w$-direction, respectively.
		However, this is sufficient for the construction of modulating fronts.
		To see this, we recall that the periodic solution established in Lemma \ref{lem:periodicSols} satisfies $u_\text{per} = \curlO(\varepsilon)$ and $w_\text{per} := v_\text{per} + \gamma u_\text{per}^2 = \curlO(\varepsilon^2)$.
		We will see that the modulating front solutions have the same property and thus, the center manifold constructed above is sufficiently large to contain modulating fronts.
\end{rem}

\begin{rem}\label{rem:whydoesthiswork}
	As seen above, the construction of the center manifold does not require a normal form transform as in \cite{haragusSchneider99}.
	The reason for this lies both in the different rescaling of the $U$ and $W$ contribution in \eqref{eq:rescaling} and the transformation $w = v + \gamma u^2$ which gives an additional $\varepsilon$ in front of the quadratic term in \eqref{eq:ConTransformed}.
	Using this we see that $U_c W_c = \curlO(\varepsilon^{\gamma_u+\gamma_w})$ and $c \partial_\xi(U_c^2) = \curlO(\varepsilon^{1+2\gamma_u})$.
	Therefore, both critical quadratic nonlinearities scale better than $\varepsilon^2$ for $\gamma_u, \gamma_w$ chosen in \eqref{eq:rescalingCoeffs} and can be treated without further issues.
\end{rem}

\section{Reduced equations and heteroclinic connections}\label{sec:ReducedEquationsAndHeteroclinicConnections}

We now derive the reduced equations on the center manifold constructed in Theorem \ref{thm:centerManifold} and establish the existence of heteroclinic orbits connecting a circle of non-trivial fixed points to the origin.
The reduction mainly follows \cite{eckmannWayne91}, however, the construction of the heteroclinic orbits requires more work since we obtain an additional equation corresponding to the conservation law.
The main idea for this construction is to handle first the case $\gamma = 0$ which corresponds to the pure Swift-Hohenberg equation and afterwards use perturbation arguments to establish the persistence of these orbits for $\gamma \neq 0$ close to $0$.
Finally we numerically consider the case for large $\gamma$ in which we also find heteroclinic orbits.

\subsection{Derivation of the reduced equations}

We start by introducing the following coordinates for the central modes
\begin{align*}
	\begin{pmatrix}
		U_c(\xi,p) \\ W_c(\xi)
	\end{pmatrix} 
	&= \varepsilon \left(A(\varepsilon\xi) \varphi_1^+ + B(\varepsilon\xi) \varphi_1^-\right) e^{ik_c p} + \varepsilon \left(\overline{A}(\varepsilon\xi) \overline{\varphi_1^+} + \overline{B}(\varepsilon\xi) \overline{\varphi_1^-}\right) e^{-ik_c p} \\
	&\qquad + (0,0,0,0,\varepsilon^2 W_{c0}(\varepsilon\xi), \varepsilon^3 W_{c1}(\varepsilon\xi))^T, 
\end{align*}
where $A,B \in \C$, $W_{c0},W_{c1} \in \R$ and $\varphi_1^+,\varphi_1^- \in \C^6$ are the eigenvectors corresponding to the eigenvalues $\lambda_{1,-}^\pm$ of $L_{1}$ computed in \eqref{eq:eigenvalueSH2}.
Here, we normalize the eigenvectors $\varphi_1^{\pm}$ such that the first component is equal to one.
Note that since $L_1$ has a block diagonal structure, the last two components of $\varphi_1^{\pm}$ are zero.
Additionally, we used that $U_1 = \bar{U}_{-1}$ -- this gives the complex conjugated terms in $U_c$ -- and that all eigenvalues of $L_0^\text{con}$ are central.
Furthermore, we find $h_u \in \curlO(\varepsilon^3)$ and $h_w \in \curlO(\varepsilon^4)$.
To see this, we recall that
\begin{align*}
    Dh_u \partial_\xi X_c &= L_h^\text{SH} h_u + \curlP^\text{SH}_h(\curlN^\text{SH}(X_c + h(X_c))), \\
    Dh_w \partial_\xi X_c &= L_h^\text{con} h_w + \curlP^\text{con}_h(\curlN^\text{con}(X_c + h(X_c)),
\end{align*}
where $X_c = (U_c,W_c)^T$ and
\begin{align*}
    \curlN^\text{SH}(X_c + h(X_c)) &= (0,0,0,(U_c + h_u(X_c))_0(W_c + h_w(X_c))_0 - (1+\gamma)(U_c+h_u(X_c))_0^3)^T, \\
    \curlN^\text{con}(X_c + h(X_c)) &= (0,2c\gamma (U_c + h_u(X_c))_0 (U_c + h_u(X_c))_1)^T.
\end{align*}
Now, recalling that $c = \varepsilon c_0$ and noting that $h_u, h_w$ are at least quadratic in $X_c$ we find $\curlP^\text{SH}_h(\curlN^\text{SH}(X_c + h(X_c))) = \curlO(\varepsilon^3)$ and $\curlP^\text{con}_h(\curlN^\text{con}(X_c + h(X_c)) = \curlO(\varepsilon^4)$.
Finally, since the hyperbolic eigenvalues have an imaginary part which is uniformly bounded away from zero for all $\varepsilon > 0$, i.e.\ $L_h$ has a bounded inverse we find $h_u = \curlO(\varepsilon^3)$ and $h_w = \curlO(\varepsilon^4)$ as claimed.

To derive the reduced equations on the center manifold, we recall that the reduced dynamic is determined by
\begin{align}
	\partial_\xi \begin{pmatrix}
		U_c \\ W_c
	\end{pmatrix} = L_c \begin{pmatrix}
		U_c \\ W_c
	\end{pmatrix} + \curlP_c(\curlN(U_c + h_u(U_c, W_c), W_c + h_w(U_c, W_c))),
	\label{eq:reducedDynamics}
\end{align}
where $\curlP_c$ is the projection onto the central eigenspace, see \eqref{eq:centralProjection}, and the nonlinearity $\curlN$ is given in \eqref{eq:fullSystem}.
To obtain an equation for $A, B$ we now project onto the eigenspaces spanned by $\varphi_1^{\pm}$ respectively, using the projection
\begin{align*}
	P_{\varphi_1^{\pm}}((U_1,W_1)^T) = \dfrac{\scalarprod{\psi_1^\pm}{(U_1,W_1)^T}}{\scalarprod{\psi_1^{\pm}}{\varphi_1^{\pm}}} \varphi_1^\pm.
\end{align*}
Here, $\scalarprod{\cdot}{\cdot}$ is the euclidean scalar product on $\C^6 \times \C^6$ and $\psi_1^{\pm}$ is the eigenvector of the adjoint matrix $L_1^{\ast}$ corresponding to the eigenvector $\overline{\lambda_{1,-}^{\pm}}$.
We normalize $\psi_1^{\pm} = ((\psi_1^{\pm})_0,(\psi_1^{\pm})_1,(\psi_1^{\pm})_2,(\psi_1^{\pm})_3,0,0)^T$ such that $(\psi_1^{\pm})_3 = 1$, which yields that
\begin{align*}
	&P_{\varphi_1^{\pm}}(\curlN_1(U_c + h_u(U_c,W_c),W_c + h_w(U_c,W_c))) \\
	&\qquad = \dfrac{1}{\scalarprod{\psi_1^{\pm}}{\varphi_1^{\pm}}} \left(\varepsilon^3 (A+B) W_{c0} - 3 (1+\gamma)(A+B)\snorm{A+B}^2 + \curlO(\varepsilon^5)\right) \varphi_1^{\pm},
\end{align*}
where we also used again the first component of $\varphi_1^{\pm}$ is equal to one.
Therefore, applying $P_{\varphi_1^{\pm}}$ to \eqref{eq:reducedDynamics} we find
\begin{align*}
	\varepsilon^2 \partial_{\tilde{\xi}} A &= \varepsilon \lambda_1^+ A + \dfrac{1}{\scalarprod{\psi_1^{+}}{\varphi_1^{+}}} \left(\varepsilon^3 (A+B) W_{c0} - 3 (1+\gamma)(A+B)\snorm{A+B}^2 + \curlO(\varepsilon^5)\right), \\
	\varepsilon^2 \partial_{\tilde{\xi}} B &= \varepsilon \lambda_1^- B + \dfrac{1}{\scalarprod{\psi_1^{-}}{\varphi_1^{-}}} \left(\varepsilon^3 (A+B) W_{c0} - 3 (1+\gamma)(A+B)\snorm{A+B}^2 + \curlO(\varepsilon^5)\right),
\end{align*}
where $\tilde{\xi} = \varepsilon\xi$.
Next, we calculate the normalizing constant $\scalarprod{\psi_1^\pm}{\varphi_1^\pm}^{-1}$.
For that we use that $\scalarprod{\psi_1^\pm}{\varphi_1^\pm} = -p_1'(\lambda_{1,-}^\pm)$, the derivative of the negative characteristic polynomial of $L_1^\text{SH}$, see \cite[Appendix C]{eckmannWayne91} and recall that $\psi_1^\pm$ is normalized such that the last non-zero component is one.
Using the characteristic polynomial given in Appendix \ref{app:eigenvalues} and that $\lambda_{1,-}^\pm = \varepsilon (-c_0\pm \Delta)/8 + \curlO(\varepsilon)$ with $\Delta = \sqrt{c_0^2-16\alpha_0}$, we find
\begin{align*}
	\scalarprod{\psi_1^\pm}{\varphi_1^\pm} = \mp \varepsilon \Delta + \curlO(\varepsilon^2),
\end{align*}
and thus,
\begin{align*}
	\scalarprod{\psi_1^{\pm}}{\varphi_1^{\pm}}^{-1} = \dfrac{\mp 1}{\varepsilon\Delta}(1+ \curlO(\varepsilon)).
\end{align*}
Then, the reduced equations for $A, B$ are given by
\begin{subequations}
	\begin{align}
		\partial_{\tilde{\xi}} A &= \dfrac{-c_0+\Delta}{8} A - \dfrac{1}{\Delta} \left((A+B)W_{c0} - 3(1+\gamma) (A+B) \snorm{A+B}^2\right) + \curlO(\varepsilon^2), \label{eq:CMdynA}\\
		\partial_{\tilde{\xi}} B &= \dfrac{-c_0-\Delta}{8} B + \dfrac{1}{\Delta} \left((A+B)W_{c0} - 3(1+\gamma) (A+B) \snorm{A+B}^2\right) + \curlO(\varepsilon^2) \label{eq:CMdynB}
	\end{align}
\end{subequations}

To derive an equation for $W_{c0},W_{c1}$ we use that the two central eigenvalues originating from the conservation law are the two eigenvalues of $L_0^{\text{con}}$.
Next, we recall from \eqref{eq:spatDynSys} that
\begin{align*}
	\partial_\xi \begin{pmatrix}
		W_{00} \\ W_{01}
	\end{pmatrix} = L_0^{\text{con}} \begin{pmatrix}
		W_{00} \\ W_{01}
	\end{pmatrix} + \begin{pmatrix}
		0 \\ 2 \varepsilon c_0 \gamma \sum_{p+q=0} U_{p0} U_{q1}.
	\end{pmatrix}
\end{align*}
Furthermore, we recall that $U_{nj} = \partial_\xi^j \curlU_n$ and $\curlU_n = \bar{\curlU}_{-n}$, which yields
\begin{align*}
	\sum_{p + q = 0} U_{p0}U_{q0} = 2 \curlU_0 \partial_\xi \curlU + \sum_{p \in \N} (\curlU_p \overline{\partial_\xi \curlU_p} + \bar{\curlU}_p \partial_\xi \curlU_p) = \partial_\xi \sum_{p \in \N_0} \snorm{\curlU_{p0}}^2,
\end{align*}
with $\N_0 = \N \cup \{0\}$.
This yields that
\begin{align*}
	\partial_\xi^2 W_{00} = -\varepsilon c_0 \partial_\xi W_{00} + 2 \varepsilon c_0 \gamma \partial_\xi \sum_{p \in \N_0} \snorm{U_{p0}}^2,
\end{align*}
which we integrate once with respect to $\xi$ to obtain
\begin{align*}
	\partial_\xi W_{00} = -\varepsilon c_0 W_{00} + 2 \varepsilon c_0f \gamma \sum_{p \in \N_0} \snorm{U_{p0}}^2.
\end{align*}
Note that using this integration we have eliminated the central eigenvalue at zero.
Therefore, in the above coordinates the equation on the center manifold for $W_{c0}$ then reads as
\begin{align*}
	\varepsilon^3 \partial_{\tilde{\xi}} W_{c0} = - \varepsilon^3 c_0 W_{c0} + 2 \varepsilon^3 c_0 \gamma \snorm{A+B}^2 + \curlO(\varepsilon^6).
\end{align*}

Now, following \cite[Section 3]{eckmannWayne91}, we substitute $A,B$ in \eqref{eq:CMdynA}--\eqref{eq:CMdynB} by
\begin{align*}
	A = \dfrac{1}{2}\left(\hat{A} + \dfrac{c_0}{\Delta} \hat{A} + \dfrac{8}{\Delta} \tilde{B}\right), \quad B = \dfrac{1}{2}\left(\hat{A} - \dfrac{c_0}{\Delta} \hat{A} - \dfrac{8}{\Delta} \tilde{B}\right),
\end{align*}
and obtain, by additionally defining $\hat{B} := \partial_{\tilde{\xi}} \hat{A} = \tilde{B} + \curlO(\varepsilon^2)$ and $\hat{W}_0 := W_{c0}$, that
\begin{subequations}
	\begin{align}
	\partial_{\tilde{\xi}}  \hat{A} &= \hat{B}, \label{eq:reducedSH1}\\
	\partial_{\tilde{\xi}} \hat{B} &= \dfrac{1}{4}\left(-\alpha_0 \hat{A} - c_0 \hat{B} - \hat{A} \hat{W}_0 + 3(1+\gamma) \hat{A} \snorm{\hat{A}}^2\right) + \curlO(\varepsilon^2), \label{eq:reducedSH2}\\
	\partial_{\tilde{\xi}} \hat{W}_0 &= -c_0 \hat{W}_0 + 2 c_0 \gamma \snorm{\hat{A}}^2 + \curlO(\varepsilon^3) \label{eq:reducedCon}.
	\end{align}
\end{subequations}
This system then determines the dynamic on the center manifold.

\begin{rem}
	Note that although we have 6 central eigenvalues, the reduced system \eqref{eq:reducedSH1}--\eqref{eq:reducedCon} is 3-dimensional.
	As explained in the derivation, by integrating the equation for $W_{00}$ once with respect to $\xi$ we removed the zero eigenvalue by using the conservation property.
	Furthermore, since the solutions of \eqref{eq:SHe}--\eqref{eq:ConE} are supposed to be real, we have $U_j = \bar{U}_{-j}$ for $j \in \Z$. This means that the remaining two central eigenvalues $\lambda_{-1,+}^{\pm}$ yield the complex conjugated equations of \eqref{eq:reducedSH1}--\eqref{eq:reducedSH2}.
\end{rem}

\subsection{Construction of a heteroclinic orbit}

We now show that the ODE system \eqref{eq:reducedSH1}--\eqref{eq:reducedCon} has a circle of nontrivial fixed points.
We also show that for $(\varepsilon,\gamma) = (0,0)$ the system exhibits heteroclinic orbits connecting the nontrivial fixed points to the origin.
Finally, we prove that these orbits are persistent for $(\gamma,\varepsilon)$ in a sufficiently small neighborhood of zero, which gives the main result, Theorem \ref{thm:modFronts}.

First, we study the fixed points of \eqref{eq:reducedSH1}--\eqref{eq:reducedCon}, provided that $c_0^2 > 16\alpha_0$.
The system has the trivial fixed point $(0,0,0)$.
Furthermore, for $\varepsilon = 0$ and $\gamma > -3$ we find a circle of nontrivial fixed points at
\begin{align*}
	\left(\sqrt{\dfrac{1}{3+\gamma} \alpha_0} e^{i\phi}, 0, \dfrac{2\gamma}{3+\gamma}\alpha_0\right),
\end{align*}
where $\phi \in [0,2\pi)$. 
Using the implicit function theorem similar to the proof of Lemma \ref{lem:periodicSols} these fixed points also persist for $\varepsilon > 0$ sufficiently small.
Note that these fixed points correspond to the periodic solutions in Lemma \ref{lem:periodicSols} since inserting the fixed points into the transformation for $v$ gives
\begin{align*}
	v = w - \gamma u^2 = \varepsilon^2 \hat{W}_0 - 2\gamma \varepsilon^2 \snorm{\hat{A}}^2 + \curlO(\varepsilon^3) = \curlO(\varepsilon^3).
\end{align*}

Next, we set $(\gamma,\varepsilon) = (0,0)$.
Then, the origin is a hyperbolic stable fixed point since the linearization has real, negative spectrum.
Additionally, splitting the system in real and imaginary parts -- recall that $\hat{W}_0$ is real -- we find that the circle of nontrivial fixed points forms a normally hyperbolic invariant set with one positive, one zero and three negative eigenvalues.
The existence of a heteroclinic orbit connecting the circle of nontrivial fixed points with the origin follows from the fact that $(\hat{A},\hat{B},0)$ is an invariant set and thus, the system reduces to the system studied in \cite{eckmannWayne91}.
Therefore, the heterclinic orbit can be found by phase plane analysis.

We now show that these orbits persist for $(\gamma,\varepsilon)$ in a small neighborhood of zero.
The idea is to show that for $(\gamma, \varepsilon) = (0,0)$ the unstable manifold of the nontrivial fixed point and the stable manifold of the origin intersect transverally.
Therefore, the intersection is stable with respect to small perturbations and therefore, the intersection persists for $(\gamma,\varepsilon)$ in a small neighborhood of zero.
Therefore, we define
\begin{align*}
	H(\hat{A},\hat{B}) := 2\snorm{\hat{B}}^2 + \dfrac{\alpha_0}{2} \snorm{\hat{A}}^2 - \dfrac{3}{4} \snorm{\hat{A}}^4,
\end{align*}
which is a Lyapunov function for the system \eqref{eq:reducedSH1}--\eqref{eq:reducedSH2} for $(\gamma,\varepsilon) = (0,0)$ and $\hat{W}_0 = 0$.
Differentiation with respect to $\xi$ of $H$ along a solution yields $\partial_\xi H = -c_0 \snorm{\hat{B}}^2 \leq 0$.
Furthermore, we remark the system has no fixed points with $\snorm{\hat{A}}^2 < \alpha_0/3$ except for the origin.
Thus, the stable manifold of the origin, denoted by $\curlM_s(0,0,0)$, contains the set
\begin{align*}
	S_0 = \left\{(\hat{A},\hat{B},\hat{W}_0) \in \C^3 : \hat{W}_0 = 0 \text{ and } H(s\hat{A},s\hat{B}) < H\left(\sqrt{\dfrac{\alpha_0}{3}},0\right) \text{ for all } s \in [0,1]\right\}.
\end{align*}
We now show that the unstable manifold of the nontrivial fixed point $\curlM_u(A_f,B_f,W_f)$ intersects $\curlM_s(0,0,0)$ transversally.
Therefore, we note that $\curlM_u(A_f,B_f,W_f)$ lies in the set $(\hat{A},\hat{B},0) \in \C^3$ since this is invariant and $\curlM_u(A_f,B_f,W_f)$ is tangential to the unstable eigenspace of the nontrivial fixed points which has no contribution in $\hat{W}_0$-direction for $(\gamma,\varepsilon) = (0,0)$.
Moreover, the unstable eigenspace of the nontrivial fixed point intersects $S_0$ transversally in $\C^2 \times \{0\}$.
Using the stable manifold theorem (see e.g.\ \cite{perko01}) and the fact that the stable space of the linearization about the origin is three dimensional, we conclude that $\curlM_s(0,0,0)$ is a smooth, three dimensional manifold in $\C^3$.
Therefore, for every point $(\hat{A},\hat{B},0) \in S_0$ there exists a small neighborhood in $\C^3$ which is contained in $\curlM_s(0,0,0)$.
Hence, $\curlM_s(0,0,0)$ and $\curlM_u(A_f,B_f,W_f)$ intersect transversally and we obtain the persistence of the heteroclinic connections for small perturbations.
In particular, this includes $(\gamma, \varepsilon)$ in a small neighborhood of zero.

Now using the transformation $W = V + \gamma U^2$, we obtain the existence of modulating traveling fronts for \eqref{eq:SHe}--\eqref{eq:ConE}.
In particular, this also yields an approximate representation of these solutions.
Thus, we proved the following theorem.

\begin{theorem}\label{thm:modFronts}
	Let $c_0^2 > 16\alpha_0 > 0$.
	Then there exists $\varepsilon_0 > 0$, $\gamma_0 > 0$ such that for all $\varepsilon < \varepsilon_0$ and $\snorm{\gamma} < \gamma_0$ the system~\eqref{eq:SHe}--\eqref{eq:ConE} has a modulating travelling front solution $(u_f, v_f)$.
	Furthermore, it holds that
	\begin{align*}
		u_f(t,x) &= \varepsilon 2 \snorm{\hat{A}(y)} \cos(x + x_0) + \mathcal{O}(\varepsilon^2), \\
		v_f(t,x) &= \varepsilon^2\left[\hat{W}_0(y) - 2\gamma \snorm{\hat{A}(y)}^2 - 2 \gamma \snorm{\hat{A}(y)}^2 \cos(2 (x + x_0))\right] + \mathcal{O}(\varepsilon^3),
	\end{align*}
	in $L^\infty$ where $y = \varepsilon x - \varepsilon^2 c_0 t$, $x_0 \in [0,2\pi)$ and $(\hat{A},\hat{W}_0)$ are a heteroclinic solution of~\eqref{eq:reducedSH1}--\eqref{eq:reducedCon}.
\end{theorem}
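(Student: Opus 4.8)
The plan is to assemble the statement from the center manifold reduction of Theorem~\ref{thm:centerManifold} together with a phase-plane analysis and a structural-stability argument for the reduced three-dimensional system. First I would recall the spatial-dynamics reformulation: inserting the modulating-front ansatz $u=\curlU(\xi,p)$, $v=\curlV(\xi,p)$ with $\xi=x-\varepsilon c_0 t$, $p=x$ into \eqref{eq:SHe}--\eqref{eq:ConE}, applying the transformation $\curlW=\curlV+\gamma\curlU^2$, and Fourier-expanding in $p$ turns the PDE into the infinite-dimensional first-order system \eqref{eq:fullSystem} on $\curlE_l$. By Theorem~\ref{thm:centerManifold} every small bounded solution lies on a six-dimensional center manifold whose graph has size $\curlO(\varepsilon^{2/3+\delta})$ in $U$ and $\curlO(\varepsilon^{4/3})$ in $W$, so it suffices to produce, for the reduced equation \eqref{eq:CMEquation}, a heteroclinic orbit connecting the circle of nontrivial fixed points (which, by Lemma~\ref{lem:periodicSols} and the back-transformation $v=w-\gamma u^2$, corresponds to the periodic pattern) to the origin (the unstable ground state).

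Second, I would derive the reduced equations in adapted coordinates: expand $(U_c,W_c)$ in the eigenvectors $\varphi_1^{\pm}$ of $L_1^{\mathrm{SH}}$ attached to the Fourier mode $n=\pm1$ and in the two central eigenvectors of $L_0^{\mathrm{con}}$, compute $\curlP_c\curlN$ to leading order using $h_u=\curlO(\varepsilon^3)$, $h_w=\curlO(\varepsilon^4)$, and exploit that the $W_{00}$-equation is itself a conservation law, so it can be integrated once in $\xi$; this removes the zero eigenvalue and lowers the effective dimension. After the linear change of variables to $\hat A,\hat B,\hat W_0$ with $\hat B=\partial_{\tilde\xi}\hat A$, one arrives at the genuinely three-dimensional system \eqref{eq:reducedSH1}--\eqref{eq:reducedCon} (with $\hat A\in\C$), the mode $n=-1$ contributing only the complex conjugate equations via $U_{-1}=\bar U_1$.

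Third — the heart of the argument — I would construct the connection at the distinguished parameter value $(\gamma,\varepsilon)=(0,0)$ and then perturb. At $(\gamma,\varepsilon)=(0,0)$ the hyperplane $\{\hat W_0=0\}$ is invariant and the restricted flow is exactly the Swift--Hohenberg reduced system of \cite{eckmannWayne91}; the function $H(\hat A,\hat B)=2\snorm{\hat B}^2+\tfrac{\alpha_0}{2}\snorm{\hat A}^2-\tfrac34\snorm{\hat A}^4$ decreases along orbits, $\partial_\xi H=-c_0\snorm{\hat B}^2\le0$, and since the only fixed point with $\snorm{\hat A}^2<\alpha_0/3$ is the origin, phase-plane analysis yields a heteroclinic orbit from the circle of nontrivial fixed points to $0$. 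I would then check that this circle is a normally hyperbolic invariant set (one positive, one zero from the $S^1$-phase symmetry, three negative eigenvalues), that the stable manifold $\curlM_s(0,0,0)$ is a smooth three-dimensional manifold containing the explicit sublevel set $S_0$, and that the one-dimensional unstable manifold $\curlM_u(A_f,B_f,W_f)$ meets $\curlM_s(0,0,0)$ transversally in the reduced phase space — using that at $(\gamma,\varepsilon)=(0,0)$ the unstable eigenspace of the nontrivial fixed point has no $\hat W_0$-component and crosses $S_0$ transversally inside $\{\hat W_0=0\}$. Transversality of a heteroclinic intersection is stable under small perturbations, which gives persistence of the connection for $(\gamma,\varepsilon)$ in a neighbourhood of $(0,0)$, and hence $\varepsilon_0,\gamma_0>0$ as claimed.

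Finally I would undo the rescaling \eqref{eq:rescaling} and the transformation $v=w-\gamma u^2$, and read off the $L^\infty$-expansion of $(u_f,v_f)$ from the center-manifold parametrization: the leading term $\varepsilon\,2\snorm{\hat A(y)}\cos(x+x_0)$ comes from the $n=\pm1$ central coordinates, while the contributions $-2\gamma\snorm{\hat A(y)}^2$ and $-2\gamma\snorm{\hat A(y)}^2\cos(2(x+x_0))$ in $v_f$ come from $\hat W_0$ and from $h_w=-\gamma\varepsilon^2(A^2e^{2ik_cx}+\mathrm{c.c.})+\curlO(\varepsilon^4)$ after substituting back for $v$. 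I expect the main obstacle to be the third step: verifying the transversality of $\curlM_u$ and $\curlM_s(0,0,0)$ at $(\gamma,\varepsilon)=(0,0)$ precisely enough — in particular handling the degeneracy caused by the zero eigenvalue of the normally hyperbolic circle, which one resolves by working transverse to the group orbit — so that the perturbation argument covers the two-parameter family in $(\gamma,\varepsilon)$ simultaneously.
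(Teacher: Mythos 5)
Your proposal follows essentially the same route as the paper: center manifold reduction via Theorem \ref{thm:centerManifold}, derivation of the reduced system \eqref{eq:reducedSH1}--\eqref{eq:reducedCon} by projecting onto $\varphi_1^\pm$ and integrating the conserved $W_{00}$-equation once, construction of the heteroclinic orbit at $(\gamma,\varepsilon)=(0,0)$ in the invariant set $\{\hat W_0=0\}$ with the Lyapunov function $H$ and the set $S_0$, transversality of $\curlM_u$ and $\curlM_s(0,0,0)$ plus persistence for small $(\gamma,\varepsilon)$, and back-transformation $v=w-\gamma u^2$ to read off the expansion. One small correction: in the spatial-dynamics reduction $h_w=\curlO(\varepsilon^4)$, so the term $-2\gamma\snorm{\hat A}^2\cos(2(x+x_0))$ in $v_f$ does not come from the formula $h_w=-\gamma\varepsilon^2(A^2e^{2ik_cx}+\mathrm{c.c.})$ (that is the temporal-dynamics $h_v$ of Lemma \ref{lem:periodicSols}), but solely from $-\gamma u_f^2$ in the back-transformation $v=w-\gamma u^2$.
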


\begin{figure}
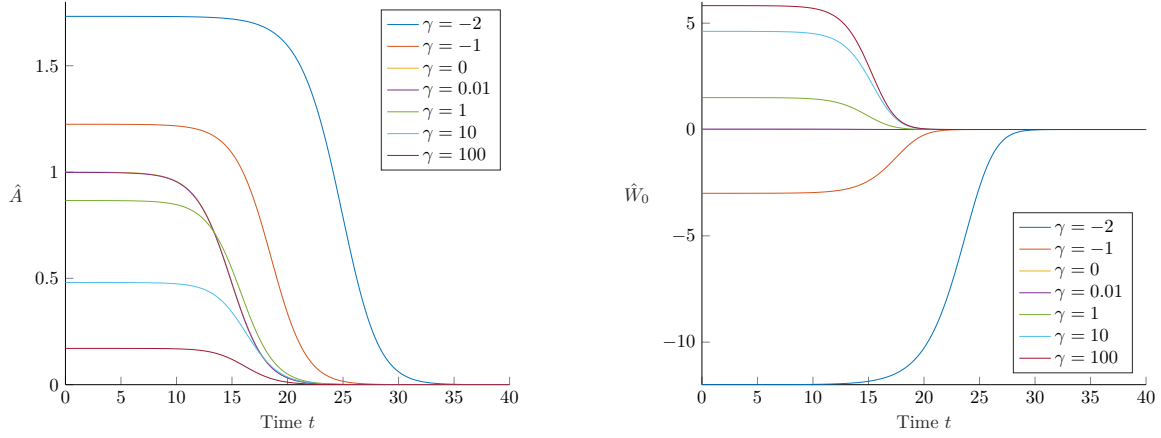

	\centering
	\include{HeteroclinicOrbits}
	\caption{Numerical solutions of reduced system \eqref{eq:reducedSH1}--\eqref{eq:reducedCon} for $c_0 = 7$, $\alpha_0 = 3$.}
	\label{fig:heteroclinicOrbits}
\end{figure}

\begin{rem}
	Recall that we set $k_c = 1$ in the calculations above.
	However, the stationary, periodic solutions exist also for $k_c$ close to 1.
	Therefore, we expect that a similar results holds in the setting of Lemma \ref{lem:periodicSols}.
\end{rem}

We stress that the above result is only valid for $\gamma$ close to zero.
However, we expect that this is a restriction only imposed due to the use of perturbation arguments.
A first intuition comes from the fact that the circle of nontrivial fixed points exists for all $\gamma > -3$.
This is backed by Figure \ref{fig:heteroclinicOrbits} which shows a numerical simulation of heteroclinic orbits for different values of $\gamma$.
For this we approximated an initial point on the unstable manifold of the nontrivial fixed point using the unstable eigenvector of the linearization about this fixed point.
Then, using this approximation, we solved the forward dynamics given by the reduced system \eqref{eq:reducedSH1}--\eqref{eq:reducedCon}.
It turns out that the spectral stability of the nontrivial fixed point does not change, i.e.\ the circle of nontrivial fixed points is normally hyperbolic with exactly one unstable direction.
Furthermore, the numerical calculation gives heteroclinic orbits of the system which hints the existence of such solutions.

Finally, the numerical calculation hints that for large $\gamma$ the solution is close to a heteroclinic orbit of a limiting system which can be constructed by introducing the rescaled variables
\begin{align*}
	\begin{pmatrix}
	\tilde{A} \\ \tilde{B}
	\end{pmatrix} := \sqrt{\dfrac{3+\gamma}{\alpha_0}} \begin{pmatrix}
	\hat{A} \\ \hat{B}
	\end{pmatrix} \text{ and } \tilde{W}_0 := \dfrac{3+\gamma}{2\alpha_0 \gamma} \hat{W}_0.
\end{align*}
Inserting this into \eqref{eq:reducedSH1}--\eqref{eq:reducedCon} and formally passing to $\gamma \rightarrow \infty$ we find the formal asymptotic system
\begin{align*}
	\partial_\xi \tilde{A} &= \tilde{B} \\
	\partial_\xi \tilde{B} &= \dfrac{1}{4}\left(-c_0 \tilde{B} - \alpha_0 \tilde{A} - 2\alpha_0 \tilde{A} \tilde{W}_0 + 3\alpha_0 \tilde{A}\snorm{\tilde{A}} \right) \\
	\partial_\xi \tilde{W}_0 &= -c_0 \tilde{W_0} + c_0 \snorm{\tilde{A}}^2.
\end{align*}
As expected this system has a trivial fixed point at $(0,0,0)$ and a circle of nontrivial fixed points at $(e^{i\phi},0,1)$ for $\phi \in [0,2\pi)$.
However, we note that this asymptotic model has limited use for the description of the full dynamics \eqref{eq:SHe}--\eqref{eq:ConE} since the size of the center manifold vanishes for $\gamma \rightarrow \infty$.

\section{Discussion} \label{sec:discussion}

\begin{figure}
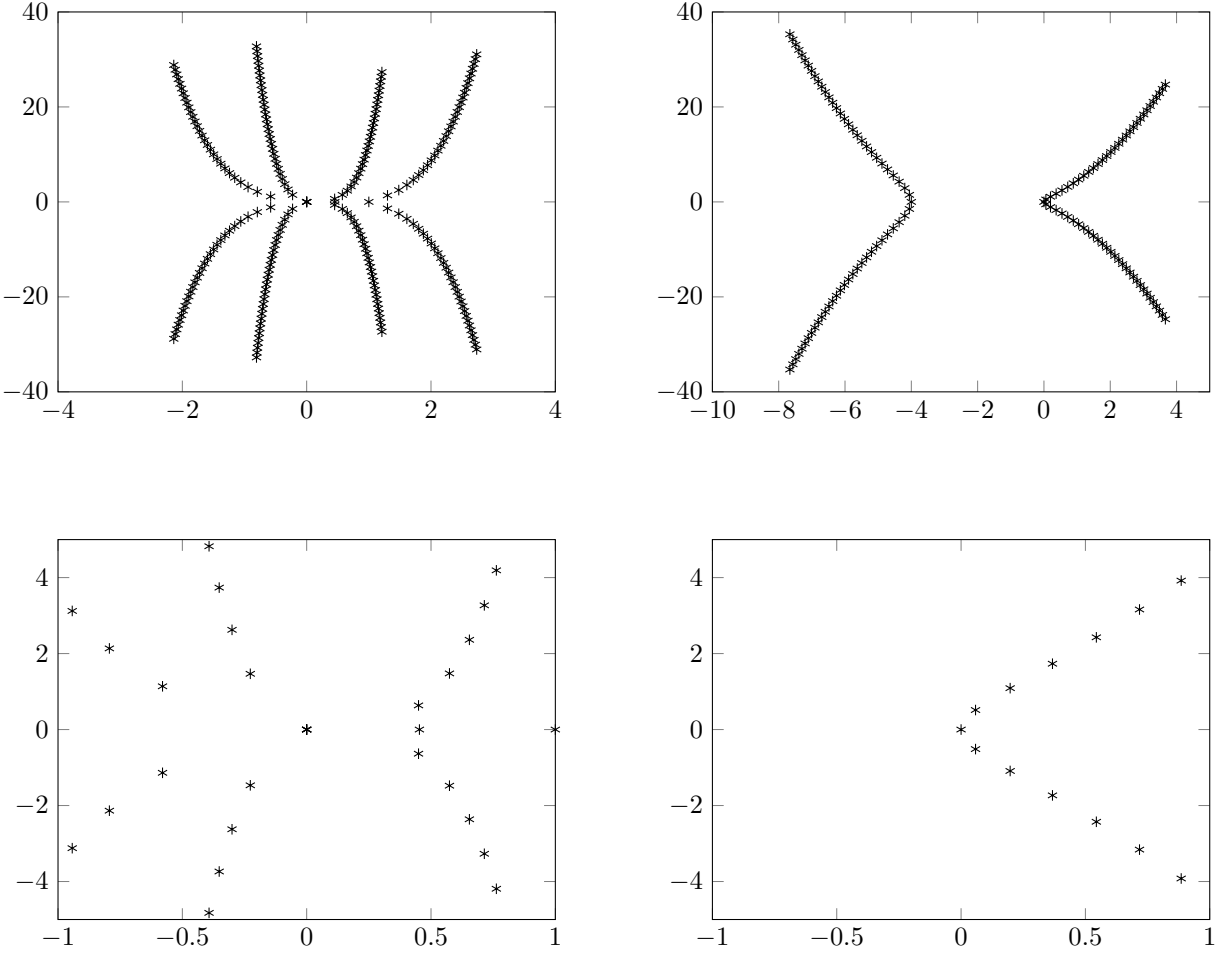

	\centering
	\include{specDispersion}
	\caption{Numerical calculation of the eigenvalues of $L_n^\text{SH}$ and $L_n^\text{con}$ corresponding to \eqref{eq:extendedSH} and \eqref{eq:extendedCon}, respectively, for $\alpha_0 = 1$, $c_u,c_v = 1$, $c = 3$ and $\varepsilon = 0$. 
		Top left and right show the eigenvalues of $L_n^\text{SH}$ and $L_n^\text{con}$ for $-30 \leq n \leq 30$, respectively. 
		Bottom left and right are rescaled versions of the top left and top right plots, respectively, centered around the origin.}
	\label{fig:spectrumExtended}
\end{figure}

We now discuss some possible extensions and open questions.

\paragraph{Adding dispersion.}
In this paper we only  consider the case of vanishing phase velocities $\beta = 0$, i.e.\ that the periodic pattern is stationary.
However, it is also interesting to study systems where this is not the case.
One example of such systems is
\begin{subequations}
	\begin{align}
		\partial_t u &= -(1+\partial_x^2)^2 u + \varepsilon^2 \alpha_0 u + c_u \partial_x^3 u + uv + u \partial_x u - u^3 \label{eq:extendedSH}\\
		\partial_t v &= \partial_x^2 v + c_v \partial_x v + \gamma_1 \partial_x^2 (u^2) + \gamma_2 \partial_x (u^2) \label{eq:extendedCon},
	\end{align}
\end{subequations}
which is a toy model corresponding to a thin-film flow on an inclined, heated surface.
Since the spectrum of the linearisation around $(u,v) = (0,0)$ has non-zero imaginary part, we can construct non-stationary periodic solutions of the form $(u,v)(t,x) = (U,V)(x-\beta t)$ using center manifold theory similar to Lemma \ref{lem:periodicSols}.
Here, $\beta = c_u + \curlO(\varepsilon)$ denotes the phase velocity.
A similar situation also occurs in the Taylor-Couette problem in the case of counter-rotating cylinders, see \cite{ioossMielke91,haragusSchneider99}.

It turns out that due to the dispersion in \eqref{eq:extendedSH}--\eqref{eq:extendedCon} the spectral situation in the construction of modulating front simplifies.
Making a modulating front ansatz $(u,v)(t,x) = (U,V)(x-ct,x-\beta t)$ and rewriting the system as a first order system in $\xi = x-ct$ similar to Section \ref{subsec:spatialformulation} we find that the there is an $\curlO(1)$-spectral gap in the spectrum (see Figure \ref{fig:spectrumExtended}).
Therefore, standard center manifold theory is applicable.
A closer inspection of the central spectrum shows that depending on the front velocity $c$ different situations appear.
For $c \neq 3c_u + o(1)$ we obtain a 3-dimensional center manifold while for $c$ close to $3c_u$ (with respect to $\varepsilon$) the center manifold is 5-dimensional.
Note that $3c_u$ is the group velocity of the periodic solution corresponding to the critical Fourier mode $k_c = \pm 1$.
Thus, the first case corresponds the case that the front either moves faster or slower than the group velocity while the second case corresponds to the front velocity being close to the group velocity.
However, although the center manifold can be established with standard theory in this case, the difficulty for the existence of modulating fronts lies in the construction of heteroclinic orbits for the reduced model since the presence of dispersion leads to a reduced equation with complex coefficients.

\paragraph{Nonlinear stability.}
Another largely open question is the stability of the solutions presented here and we briefly discuss existing results in this direction here.
We start by discussing stability of the periodic solutions established in Lemma \ref{lem:periodicSols}.
Spectral stability has already been discussed in \cite{matthewsCox00, sukhtayev16ArXiv} for a similar model using Bloch-Floquet theory and we expect that similar calculations apply in our case.
Furthermore, in recent years there were many results concerning the diffusive stability of periodic solutions in systems of conservation laws and we refer to \cite{johnsonZumbrun10,johnsonZumbrun11,barkerJohnsonNobleRodriguesZumbrun13} for details.
Therefore, we conjecture that stability of the periodic solutions from Lemma \ref{lem:periodicSols} can be established.
However, a rigorous proof of this stability is an open problem.

Now we turn to the modulating fronts.
The main problem for establishing stability of these solutions is the fact that they connect an unstable state, the origin, to a (possibly) stable state, the periodic solution.
In the case of the Swift-Hohenberg equation \cite{eckmannSchneider02} and the Taylor-Couette problem \cite{eckmannSchneider00} nonlinear stability with respect to exponentially localized perturbations has already been established.
The key idea in both papers is to introduce exponentially weighted variables to stabilize the origin in the Bloch-transformed problem and then applying renormalization group theory to close the argument.
Whether these ideas can also be used to establish stability in the case of an additional conservation law is open, however.
Especially, following the discussion in \cite{barkerJohnsonNobleRodriguesZumbrun13}, it is unclear whether renormalization groups can still be used to close the argument or if other tools are necessary.

\section*{Acknowledgments}
The author thanks Bj{\"o}rn de Rijk and Guido Schneider for valuable discussions.
This work was supported by the German Research Foundation (DFG) within the Cluster of Excellence in Simulation Technology (EXC 310/2) at the University of Stuttgart.

\appendix
\section{Calculation of eigenvalues}\label{app:eigenvalues}

We now give the calculations, which lead to the expansions \eqref{eq:eigenvalueSH1}--\eqref{eq:eigenvalueCon} of the eigenvalues of $L_n$, $n \in \Z$.
Therefore, recall that $L_n$ is a block-diagonal matrix with
\begin{align*}
	L_n^\text{SH} = \begin{pmatrix}
		0 & 1 & 0 & 0 \\
		0 & 0 & 1 & 0 \\
		0 & 0 & 0 & 1 \\
		-(1+n^2)^2 + \varepsilon^2 \alpha_0 & -4in(1-n^2)+\varepsilon c_0 & 6n^2-2 & -4in
	\end{pmatrix}
\end{align*}
and
\begin{align*}
	L_n^\text{con} = \begin{pmatrix}
		0 & 1 \\
		n^2 & 2in-\varepsilon c_0
	\end{pmatrix}.
\end{align*}

\subsection{Eigenvalues of $L_n^\text{SH}$}

We first calculate the approximation of the eigenvalues of $L_n^\text{SH}$, see \eqref{eq:eigenvalueSH1}--\eqref{eq:eigenvalueSH2}.
According to \cite[Appendix C]{eckmannWayne91} the characteristic polynomial is given by
\begin{align*}
	p_n^\text{SH}(\lambda,\varepsilon) &= -\lambda^4 -4in\lambda^3 + \lambda^2(6n^2-2) + \lambda(-4in(1-n^2)+\varepsilon c_0) -(1+n^2)^2 + \varepsilon^2 \alpha_0 \\
	&= -(\lambda+i(n+1))^2 (\lambda+i(n-1))^2 + \varepsilon c_0 \lambda + \varepsilon^2 \alpha_0.
\end{align*}
Hence for $\varepsilon = 0$, $L_n^\text{SH}$ has two double eigenvalues $\lambda_{n,\pm} = -i(n\pm 1)$.

We start by considering the case $\lambda_{1,-} = 0$ and define
\begin{align*}
	\tilde{p}^\text{SH}_{1,-}(\delta,\varepsilon) := \varepsilon^{-2} p_1^\text{SH}(\lambda_{1,-}+\varepsilon\delta,\varepsilon) = -\delta^2 (\varepsilon\delta + 2i)^2 + c_0 \delta + \alpha_0.
\end{align*}
Setting $\varepsilon = 0$ and solving for $\delta$ yields
\begin{align*}
	\delta_{\pm} = \dfrac{-c_0 \pm \Delta}{8}, \quad \Delta = \sqrt{c_0^2-16\alpha_0}.
\end{align*}
Since $\tilde{p}_{1,-}^\text{SH}$ is continuously differentiable,
\begin{align*}
	\tilde{p}_{1,-}^\text{SH}(\delta_{\pm},0) = 0 \text{ and } \partial_\delta \tilde{p}_{1,-}^\text{SH}(\delta_{\pm},0) \neq 0
\end{align*}
we can apply the implicit function theorem, which yields that for $\varepsilon > 0$ small there exists a root of $p_1^\text{SH}$ of the form
\begin{align*}
	\lambda_{1,-}^{\pm} = \varepsilon \dfrac{-c_0\pm \Delta}{8} + \curlO(\varepsilon^2).
\end{align*}
The same calculation in the case $\lambda_{-1,+} = 0$ yields
\begin{align*}
	\lambda_{-1,+}^{\pm} = \varepsilon \dfrac{-c_0 \pm \Delta}{8} + \curlO(\varepsilon^2).
\end{align*}

Next, we consider the case that $\lambda_{n,\pm} \neq 0$.
Similar to the case above, we define
\begin{align*}
	\tilde{p}_{n,\pm}^\text{SH}(\delta,\tilde{\varepsilon}) = \tilde{\varepsilon}^{-2} p_{n,\pm}^\text{SH}(-i(n\pm1 ) + \tilde{\varepsilon}\delta, \tilde{\varepsilon}^2) = -\delta^2(\mp 2i + \tilde{\varepsilon}\delta)^2 + c_0 (-i(n\pm 1) + \tilde{\varepsilon} \delta) + \tilde{\varepsilon}^2 \alpha_0,
\end{align*}
where we set $\varepsilon = \tilde{\varepsilon}^2$ so that $\tilde{p}_{n,\pm}$ is continuously differentiable in both arguments.
Setting $\tilde{\varepsilon} = 0$ and solving for $\delta$ then gives
\begin{align*}
	\delta_{\pm}^+ = i^{3/2} \dfrac{\sqrt{(n\pm 1)c_0}}{2} \text{ and } \delta_{\pm}^- = - \delta_{\pm}^+.
\end{align*}
Again the implicit function theorem is applicable and yields that for $\varepsilon > 0$ small the roots of $p_{n,\pm}^\text{SH}$ are of the form
\begin{align*}
	\lambda_{n,\pm}^+ &= -i(n \pm 1) + \varepsilon^{1/2} i^{3/2} \dfrac{\sqrt{(n\pm 1) c_0}}{2} + \curlO(\varepsilon), \\
	\lambda_{n,\pm}^- &= -i(n \pm 1) - \varepsilon^{1/2} i^{3/2} \dfrac{\sqrt{(n\pm 1) c_0}}{2} + \curlO(\varepsilon)
\end{align*}
Hence, we obtained \eqref{eq:eigenvalueSH1}--\eqref{eq:eigenvalueSH2}.

\subsection{Eigenvalues of $L_n^\text{con}$}
Let $n \neq 0$.
We now give the calculations leading up to \eqref{eq:eigenvalueCon}. In this case, the characteristic polynomial is given by
\begin{align*}
	p^\text{con}_n(\nu,\varepsilon) = (\nu-in)^2 + \varepsilon c_0 \nu.
\end{align*}
For $\varepsilon = 0$, we find a double root at $\nu_n = in$.
For $\varepsilon > 0$, we define
\begin{align*}
	\tilde{p}^\text{con}_n(\delta, \tilde{\varepsilon}) = \tilde{\varepsilon}^{-2} p_n^\text{con}(in+\tilde{\varepsilon}\delta, \tilde{\varepsilon}^2) = \delta^2 + \tilde{\varepsilon} c_0 \delta + c_0 in,
\end{align*}
which is continuously differentiable in $(\delta,\tilde{\varepsilon})$.
Next, solving $\tilde{p}^\text{con}_n(\delta,0)=0$ for $\delta$ gives two solutions $\delta_{\pm} = \pm i^{3/2} \sqrt{nc_0}$.
The implicit function theorem then yields that for $\varepsilon > 0$ small the eigenvalues of $L_n^\text{con}$ for $n \neq 0$ are approximated by
\begin{align*}
	\nu_{n,\pm} = in \pm \varepsilon^{1/2} i^{3/2} \sqrt{nc_0} + \curlO(\varepsilon),
\end{align*}
which is \eqref{eq:eigenvalueCon}.

\phantomsection
\addcontentsline{toc}{section}{References}
\bibliography{BibDeskLibrary.bib}
\bibliographystyle{alphainitials}

\end{document}